\theoremstyle{plain}
\theoremstyle{definition}
\theoremstyle{remark}
\let\altphi\phi
\let\phi\varphi
\let\varphi\altphi
\let\altphi\undefined
\newcommand{\veps}{\varepsilon}
\newcommand{\bra}[1]{\left(#1\right)}
\newcommand{\cur}[1]{\left\{#1\right\}}
\newcommand{\sqa}[1]{\left[#1\right]}
\newcommand{\ang}[1]{\left<#1\right>}
\newcommand{\abs}[1]{\left\lvert#1\right\rvert}
\newcommand{\norm}[1]{\left\lVert#1\right\rVert}
\newcommand{\Liph}{\operatorname{Lip}}
\renewcommand{\O}{\Omega}
\renewcommand{\H}{\mathbb{H}}
\newcommand{\G}{\mathbb{G}}
\newcommand{\R}{\mathbb{R}}
\newcommand{\cC}{\mathcal{C}}
\newcommand{\cF}{\mathcal{F}}
\newcommand{\cS}{\mathcal{S}}
\newcommand{\cV}{\mathcal{V}}
\newcommand{\ep}{\varepsilon}
\newcommand{\sm}{\setminus}
\newcommand{\lls}{\big(}
\newcommand{\rrs}{\big)}
\renewcommand{\exp}{\mbox{\rm exp}\;\!}
\renewcommand{\span}{\mbox{\rm span}}
\newcommand{\Lie}{\mathrm{Lie}}
\newcommand{\q}{\mathrm q}
\newcommand{\m}{\mathrm m}
\newcommand{\der}{\partial}
\newcommand{\ds}{\displaystyle}
\newcommand{\beqas}{\begin{eqnarray*}}
\newcommand{\eeqas}{\end{eqnarray*}}
\newcommand{\beqa}{\begin{eqnarray}}
\newcommand{\eeqa}{\end{eqnarray}}
\newcommand{\beq}{\begin{equation}}
\newcommand{\eeq}{\end{equation}}
\newcommand{\bce}{\begin{center}}
\newcommand{\ece}{\end{center}}
\newcommand{\pa}[1]{\left( #1 \right)}               
\newcommand{\set}[1]{\left\{ #1 \right\}}            
\newcommand{\qs}[1]{\quad\mbox{ #1} \quad}               
\newtheorem{The}{Theorem}[section]
\newtheorem{Lem}[The]{Lemma}
\newtheorem{Def}[The]{Definition}
\newtheorem{Rem}[The]{Remark}
\newtheorem{Pro}[The]{Proposition}
\newtheorem{Cor}[The]{Corollary}
\newtheorem{Exa}[The]{Example}
\newcommand{\bt}{\begin{The}}
\newcommand{\et}{\end{The}}
\newcommand{\bl}{\begin{Lem}}
\newcommand{\el}{\end{Lem}}
\newcommand{\bd}{\begin{Def}\rm}
\newcommand{\ed}{\end{Def}}
\newcommand{\br}{\begin{Rem}\rm}
\newcommand{\er}{\end{Rem}}
\newcommand{\bpr}{\begin{Pro}}
\newcommand{\epr}{\end{Pro}}
\newcommand{\bc}{\begin{Cor}}
\newcommand{\ec}{\end{Cor}}
\newcommand{\bex}{\begin{Exa}}
\newcommand{\eex}{\end{Exa}}
\begin{document}

\title
{On Lipschitz vector fields and the Cauchy problem in homogeneous groups}
\author{Valentino Magnani}
\address{Valentino Magnani, Dipartimento di Matematica, Universit\`a di Pisa \\
Largo Bruno Pontecorvo 5 \\ I-56127, Pisa}
\email{valentino.magnani@unipi.it}
\date{\today}

\author{Dario Trevisan}
\address{Dario Trevisan, Dipartimento di Matematica, Universit\`a di Pisa \\
Largo Bruno Pontecorvo 5 \\ I-56127, Pisa}
\email{dario.trevisan@unipi.it}
\date{\today}

\thanks{This work is supported by the University of Pisa, Project\ PRA\_2016\_41}
\subjclass[2010]{Primary 34A12. Secondary 53C17.}
\keywords{homogeneous groups; horizontal vector fields; Cauchy problem; uniqueness. } 
\date{\today}

\begin{abstract}
We introduce a class of ``Lipschitz horizontal'' vector fields in homogeneous groups, for which we show equivalent descriptions, e.g.\ in terms of suitable derivations. We then investigate the associated Cauchy problem, providing a uniqueness result both at equilibrium points and 
for vector fields of an involutive submodule of Lipschitz horizontal vector fields. We finally exhibit a counterexample to the general well-posedness theory for Lipschitz horizontal vector fields, in contrast with the Euclidean theory.
\end{abstract}

\maketitle


\section{Introduction}\label{sec:intro}

Homogeneous groups provide a well-established setting for the study of second-order sub-elliptic operators \cite{bonfiglioli_stratified_2007, folland_hardy_1982}. On the contrary, first order operators (vector fields) on homogeneous groups and their flows are less studied from an intrinsic point of view.
Our starting point is a rather general question: to what extent does an analogue of the Cauchy-Lipschitz theory for ordinary differential equations (ODE) holds true in the sub-Riemannian framework?

Aim of this paper is to single out a natural class of ``intrinsically Lipschitz vector fields''
in homogeneous groups, and then to investigate the corresponding Cauchy problem
\beq\label{eq:ODE0}
 \dot \gamma (t) = b(\gamma(t)), \quad \text{$t \in [0,T]$}
\eeq
associated to such vector fields and a given initial datum.
To simplify our discussion, let us consider the Heisenberg group $\H$, identified with $\R^3$
and equipped with the group operation
\begin{equation}\label{eq:heisenberg-operation}(x_1, x_2, x_3) (y_1, y_2, y_3) =  (x_1+y_1, x_2+y_2, x_3+y_3 +(x_1y_2-x_2y_1)). \end{equation}
We consider the distance $d(x,y) = \norm{x^{-1}y}$, where
\begin{equation}\label{eq:heisenberg-norm}\norm{ \bra{x_1, x_2, x_3} } :=  \bra{ \bra{|x_1|^2 + |x_2|^2}^2 + |x_3|^2 }^{1/4} \end{equation}
and fix the two horizontal vector fields
\begin{equation}\label{eq:heisenberg-fields} X_1(x_1, x_2, x_3)= \der_1 - x_2\der_3, \quad  X_2(x_1, x_2, x_3)= \der_2+ x_1\der_3.\end{equation}
In this setting, a ``Lipschitz horizontal'' vector field $b$ on $\H$ is a mapping
\[
x \mapsto b(x) = a_1 (x) X_1(x)+ a_2 (x) X_2(x),
\]
where $a_1$, $a_2$ are Lipschitz functions on $\H$, with respect to $d$.
The difficulty of the Cauchy problem \eqref{eq:ODE0} arises from little regularity of $b$,
since $d$ is not bi-Lipschitz equivalent to the Euclidean distance at any scale.
Clearly, the continuity of $b$ ensures the existence of solutions, so our issue concerns the uniqueness problem.

On the other hand, $d$ is compatible with the algebraic structure of the group, since
it is left invariant and 1-homogeneous with respect to suitable group dilations,
see Section~\ref{sec:homgauges}. 
It can be easily shown that $d$ is bi-Lipschitz equivalent to the celebrated sub-Riemannian distance on $\H$.

In sum, from the geometry of the group two competing aspects appear. 
On the positive side, the horizontality forces solutions to move along paths where their restriction is more regular. On the negative side, Lipschitz horizontal vector fields are not Lipschitz in the Euclidean sense, hence most of the classical arguments fail to apply. For these reasons, the uniqueness problem in this setting does not seem to have a natural answer.

We will present both positive and negative results on the uniqueness problem. In the Heisenberg group, we present a counterexample to uniqueness for the Cauchy problem with respect to a Lipschitz horizontal vector field, see Section~\ref{sec:counterexample}. In other words, the ODE admits two different solutions starting from the same initial position. However, the picture is more interesting, since two uniqueness results can be proved in the more general framework of homogeneous groups, see Section~\ref{sec:NotBasic}.

The first one considers equilibrium points, see Section~\ref{sec:equilibrium}.
When $\bar{x}$ is an equilibrium point for a vector field $b$ in a homogeneous group and 
$\gamma$ is another solution to \eqref{eq:ODE0}, then the function $d(\bar{x}, \gamma(t))$ satisfies a differential inequality, which by Gronwall lemma entails uniqueness and a ``quantitative'' stability, see Theorem~\ref{thm:uniqueness-stationary} for a precise statement. Indeed, in this case we can control the left-translated curve $t \mapsto \bar{x}^{-1} \gamma(t)$, since it is driven by a Lipschitz horizontal vector field (Lemma~\ref{lem:change-coordinates-counterexample}). A similar argument, where $\bar x$ is replaced by a nonconstant solution $\Gamma$
would not apply, because the difference of two non constant solutions $t  \mapsto \Gamma(t)^{-1}\gamma(t)$ with respect to the group operation would be no longer a horizontal curve, in general, as the vector fields $X_1$, $X_2$ may not be right invariant. 
In other words, the naive search for contraction estimates, as in the Euclidean case, fails.  

Our second uniqueness result confirms the above mentioned positive side.
When the vector field, besides its horizontality, is a combination of commuting directions,
then the ``regularizing effect'' of these directions prevails.
Precisely, if the vector field belongs to a special ``involutive'' submodule of 
Lipschitz horizontal vector fields, then solutions belong to a linear subspace that is
bi-Lipschitz equivalent to an Euclidean space. Once this is proved, then uniqueness follows 
by Euclidean uniqueness, see Theorem~\ref{thm:uniqueness-involutive}. 

To put our investigation in a wider perspective, let  us point out that one of the problems which stimulated the present work is originated after the article \cite{ambrosio_well-posedness_2014}, where L.~Ambrosio and the second author developed a theory of ordinary differential equations and flows in abstract metric measure spaces, in the spirit of DiPerna-Lions \cite{diperna_ordinary_1989}, i.e.\ for Sobolev vector fields. Carnot groups seen as metric measure spaces, endowed with an invariant distance and an invariant measure, fit well in the framework considered in \cite{ambrosio_well-posedness_2014}, in view of the regularizing effect of the subelliptic heat semigroup, see e.g.\ \cite[Theorem 4.10]{folland_1975}. Hence it is natural to investigate whether new well-posedness results may follow from such an abstract and general theory. Unfortunately, this is not the case, since the notion of ``abstract'' Sobolev vector field introduced in \cite{ambrosio_well-posedness_2014} appears to be stronger than the ``natural'' one, i.e.\ requiring all coefficients to belong to some intrinsic Sobolev space. For this reasons, the analogous of DiPerna-Lions theories for such Sobolev vector fields on sub-Riemannian spaces (e.g., on the Heisenberg group) remains an open problem. For instance, it would be interesting to see if the ideas appearing in known counterexamples in the Euclidean theory \cite{aizenman_vector_1978, depauw_non_2003} could be adapted to entail non-uniqueness for sub-Riemannian DiPerna-Lions flows.

Finally, it is also interesting to mention that ODEs with low regularity appear in
connection with geometric problems like the characterization of intrinsic Lipschitz graphs in the Heisenberg group. In \cite{bigolin_intrinsic_2015} the functions defining such graphs are proved to be characterized as solutions of a suitable first order nonlinear system of PDEs.
An important aspect of \cite{bigolin_intrinsic_2015} is the construction of a suitable ``Lagrangian parametrization'' for singular vector fields, that also differs from DiPerna-Lions type flows.

\medskip

{\bf Acknowledgements.} 
The authors thank Luigi Ambrosio for a fruitful discussion on the
uniqueness problem for commutative Lipschitz horizontal vector fields. 
They also thank Eugene Stepanov for pointing out the useful reference \cite{rachocircunkova_solvability_2008}.
 
%
%
%
%
%
%
%
%
%
%
%
%
\section{Notation and basic facts}\label{sec:NotBasic}

We recall some basic facts about homogeneous groups, see for instance \cite[Chapter 2.A-C]{folland_hardy_1982}.
%
%
%
%
%

\subsection{Homogeneous groups}\label{sec:homgen}
A {\em homogeneous group} can be seen as a graded linear space $\G=H^1\oplus\cdots\oplus H^m$ equipped with a polynomial group operation such that its Lie algebra $\Lie(\G)$ is {\em graded}. This assumption corresponds to the following conditions
\beq\label{eq:LieG}
\Lie(\G)=\cV_1\oplus\cdots\oplus\cV_\iota, \qquad [\cV_i,\cV_j]\subset \cV_{i+j}
\eeq
for all integers $i,j\ge0$ and $\cV_{j}=\{0\}$ for all $j>\iota$, with $\cV_\iota\neq\{0\}$. 
The integer $\iota \ge 1$ is the {\em step} of the group.
In any homogeneous group we can introduce intrinsic dilations $\delta_r:\G\to\G$
that are compatible with its algebraic structure. These are linear mappings with the additional condition 
\[
\delta_r(x)=r^ix 
\]
for each $x\in H^i$, $r>0$ and $i=1,\ldots,\iota$.

Identifying $\G$ with the tangent space $T_0\G$ of $\G$ at the origin $0$,
we have a canonical isomorphism between $H^j$ and $\cV_j$, that associates to each
$v\in H^j$ the unique left-invariant vector field $X_v\in\cV_j$ such that $X_v(0)=v$. 
It is also convenient to assume that $\G$ is equipped with a Lie product that induces on
$\G$ a structure of Lie algebra and its group operation is given by the Baker-Campbell-Hausdorff formula.
These additional conditions are possible since the exponential mapping 
\[
\exp:\Lie(\G)\to\G
\]
under our assumption is a bianalytic diffeomorphism, as we have for any simply connected nilpotent Lie group $\G$.

We denote by $\q$ the dimension of $\G$, seen as a linear space. A {\em graded basis} $(e_1,\ldots,e_\q)$ of $\G$ is a basis of vectors such that 
\[
(e_{\m_{j-1}+1},e_{\m_{j-1}+2},\ldots,e_{\m_j})
\]
is a basis of $H^j$ for each $j=1,\ldots,\iota$, where $\m_j=\sum_{i=1}^j\dim H^i$
for every $j=1,\ldots,\iota$. We also set $\m_0=0$ and $\m=\m_1$. For each $i\in\set{1,\ldots,\q}$ we define its \emph{degree} $d_i$ as the unique $d_i \in \set{1,\dots,\iota}$  such that
\[
\m_{d_i-1}< i\leq \m_{d_i}.
\]
A choice of such graded basis provides the corresponding {\em graded coordinates} 
\[
x=(x_1,\ldots,x_\q)\in\R^\q,
\]
giving the unique element $\sum_{i=1}^\q x_ie_i\in\G$. In this basis, the group operation is a polynomial map \cite[Proposition~1.2]{folland_hardy_1982}.
 We fix throughout a graded basis and associated coordinates.
Let us point out that dilations in terms of graded coordinates read as follows:
\[
\delta_r\bra{\sum_{i=1}^\q x_ie_i} = \sum_{i=1}^\q r^{d_i} x_ie_i.
\] 

\subsection{Homogeneous gauges}\label{sec:homgauges}
For $\lambda \in \R$, we say that a function $f: \G\setminus\{0\}  \to \R$ is $\lambda$-{\em homogeneous} (or homogeneous of degree $\lambda$) if one has $f(\delta_r x) = r^\lambda f(x)$, for every $r>0$, $x \in \G\setminus \{0\}$. A {\em homogeneous gauge} is any continuous $1$-homogeneous function $\norm{\cdot}: \G \to [0, \infty)$ such that $\norm{\cdot}=0$ if and only if $x =0$ and $\|{x^{-1}}\| = \|{x}\|$, for every $x \in \G$. 
If $f: \G\sm\set{0} \to \R$ is a continuous homogeneous function of degree $\lambda$ and $\norm{\cdot}$ is a homogenous gauge, then for some constant $C\ge 0$, there holds
\begin{equation}
\label{eq:domination-homogeneous-functions} \abs{f(x) } \le C \norm{ x}^\lambda \quad \text{for every $x \in \G\sm\set{0}$.}
\end{equation}
Indeed, the function $x \mapsto \abs{f(x)}/ \norm{x}^\lambda$ is bounded on the compact set $\{x \, : \, \norm{x} =1\}$ and $0$-homogeneous on all $\G\sm\set{0}$. In particular, any two homogeneous gauges are equivalent, i.e., if $\norm{\cdot}$ and $\norm{\cdot}'$ are homogenous gauges, then for some constant $C \ge1$ one has
\begin{equation}\label{eq:homogeneous-gauge-equivalence} C^{-1} \norm{ x } \le \norm{x}' \le C \norm{x},\quad \text{for every $x \in \G$.}
\end{equation} 
We fix throughout the following homogeneous gauge (in terms of the graded coordinates)
\begin{equation}\label{eq:smooth-gauge} \norm{x} := \bra{ \sum_{i=1}^\q \bra{x_i}^{ \rho_i} }^{1/N} \text{for $x \in \G$,}
\end{equation}
where $N$ is a fixed integer such that $\rho_i:= N/d_i$ is an even natural number for every $i$, e.g., $N = 2 (\iota !)$. This choice entails that $x \mapsto \norm{x}^N$  is smooth.

One can prove \cite[Proposition~1.26]{folland_hardy_1982} that any left invariant vector field $X_i$ with $X_i(0) = e_i \in H^{d_i}$ can be explicitly written  as
\beq\label{eq:explicit-canonical-fields-2} 
X_i (x) = \sum_{j=1}^\q p_i^j (x) \partial_j, \quad \text{for every $x\in \G$.}
\eeq
where $p_i^j (x)$ satisfies $p_i^j= \delta_i^j$ if $d_j \le d_i$. 
When $d_j>d_i$, $p_{ij}$ is a homogeneous polynomial of degree $d_j-d_i$ with respect to
the group dilations $\delta_r$. Precisely, with respect to our fixed graded coordinates $(x_1, \ldots, x_q)$
one has
\[
p_i^j( \delta_r (x) )  = r^{d_j -d_i} p_i^j( x ),\quad \text{for every $r \ge 0$ and $x \in \G$.}
\]
It follows from \eqref{eq:domination-homogeneous-functions} that there exists some constant $C \ge1$ such  that
\beq\label{eq:estimate-polinomials}
| p_i^j( x ) | \le C \norm{ x }^{d_j - d_i}, \quad \text{for every $x \in \G$}
\eeq
whenever $d_j\ge d_i$.
%
A {\em homogeneous distance} $d$ on $\G$ is a left invariant distance with the property $d(\delta_rx,\delta_ry)=r\,d(x,y)$ for all $x,y\in\G$ and $r>0$. 
Then $\G \ni x \mapsto d(x,0)$ is a homogeneous gauge.

\subsection{Horizontal subbundle and horizontal gradient}

We fix a scalar product on $\G$ such that the fixed graded basis is orthonormal. 
If $X_i\in\Lie(\G)$ is the unique left invariant vector field with $X_i(0)=e_i$, we thus have 
a
basis 
$(X_1,\ldots,X_\q)$  of $\Lie(\G)$. 

The so-called {\em horizontal directions} are defined by taking the
first layer $\cV_1$ of $\Lie(\G)$ and this defines the {\em horizontal fiber}
\[
H_x\G=\{X(x)\in T_x\G: X\in\cV_1\}
\]
as $x$ varies in $\G$.
We denote by $H\G$ the {\em horizontal subbundle} of $\G$, whose fibers are $H_x\G$.
We denote by $|\cdot|$ the norm arising from the fixed scalar product of $\G$.

Given an open set $\O\subset\G$, we denote by $H\O$ the restriction of the {\em horizontal subbundle} $H\G$ to the open set $\O$, whose {\em horizontal fibers} $H_x\G$ are restricted to all points $x\in \O$. 
We also introduce the natural class of $C^1$ smooth functions with respect to 
the horizontal subbundle. A function $u:\Omega\to\R$ belongs to $C^1_H(\Omega)$ if its
pointwise {\em horizontal gradient} 
\[
\nabla_H u:=(X_1 u, X_2u, \ldots, X_\m u ) = \sum_{i =1}^m (X_i u) X_i \]
is continuous in $\Omega$. Let us point out that there exist $C^1_H$ functions that are not differentiable on a set of positive measure, \cite{magnani_coarea_2005}. On the other hand, smooth functions
are dense in $C^1_H(\Omega)$ in the natural topology, \cite{FSSC4, FSSC1, garofalo1996}.


%
%
%

\section{Horizontal vector fields}

In this section, we study equivalence between alternative definitions of horizontal vector fields
that are Lipschitz continuous with respect to a homogeneous distance.
Before we reduce to horizontal fields, we state some general facts about time-dependent vector fields and associated ODE's.

\subsection{Vector fields and ODE's}

Let $(b_t)_{t \in [0,T]}$ be a continuous time-dependent vector field on an open set $\O \subseteq \G$, i.e. a map of the form
\begin{equation}
\label{eq:def-general-b} b(t,x)  = \sum_{i=1}^\q a_i(t,x) X_i(x), \quad \text{for $(t,x) \in [0,T] \times \O$,}\end{equation}
where $a_i$ is a continuous function for all $i \in \{1, \ldots, \q\}$.
As usual, we define a solution to the following ODE 
\begin{equation}\label{eq:ODE}\dot \gamma (t) = b_t(\gamma(t)) \quad \text{for $t \in [0,T]$,}  \end{equation}
as a continuous curve $\gamma:[0,T]\to \O$ such that the identity
\[
\gamma(t)=\gamma(0) + \int_{0}^t b(s,\gamma(s))\,ds \quad \text{holds for every $t \in [0,T]$.}
\]
Given $x_0 \in \Omega$, the associated Cauchy problem reads as
\begin{equation}\label{eq:cauchy-problem}
\left\{ \begin{array}{ll}
\dot \gamma (t) = b_t(\gamma(t)) & \text{for $t \in [0,T]$,} \\
\gamma(0) = x_0. &
\end{array}\right.
\end{equation}
Taking into account \eqref{eq:explicit-canonical-fields-2}, the ODE \eqref{eq:ODE} can be written in terms of the graded coordinates $(\gamma_i)_{i=1}^\q$ as the following system of differential equations,
\begin{equation}
\label{eq:ode-system-componentwise}
\dot \gamma_i(t)= \sum_{j=1}^\q a_j (t,\gamma(t)) p^{i}_j(\gamma(t)), \quad \text{$t \in (0,T)$,}
\end{equation}
for all $i \in \{1, \ldots, \q \}$. 
The following lemma, exploiting the left-invariance of the vector fields $X_i$, 
will be useful to translate the initial position in the Cauchy problem \eqref{eq:ODE} 
to the origin, when $b$ is given by \eqref{eq:def-general-b}.

\begin{Lem}\label{lem:change-initial-point-general}
Let $(b_t)_{t \in [0,T]}$ be a continuous vector field as in \eqref{eq:def-general-b}, let $(\gamma_t)_{t \in [0,T]}$ solve \eqref{eq:ODE} and let $\bar{x} \in \Omega$. Then the curve $t \mapsto \bar{\gamma}(t):= \bar{x} \gamma$ solves the ODE \eqref{eq:ODE} with respect to the vector field 
\[
 \bar{b}(t,x) = \sum_{i=1}^\q a_i(t, \bar{x}^{-1} x) X_i(x) \quad \text{where $(t,x) \in [0,T] \times (\bar{x} \O)$.}
\]
\end{Lem}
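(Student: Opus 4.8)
The plan is to transport the equation along the left-translation map $L_{\bar x}\colon\G\to\G$, $L_{\bar x}(x)=\bar x x$, exploiting that this map is a polynomial diffeomorphism (the group operation being polynomial in graded coordinates, with polynomial inverse $L_{\bar x^{-1}}$) and, crucially, that each $X_i$ is left-invariant, which means precisely $dL_{\bar x}(y)\big[X_i(y)\big]=X_i(\bar x y)$ for every $y\in\G$.

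First I would upgrade the regularity of $\gamma$. Since $b$ is continuous and $\gamma$ is a continuous curve solving the integral form of \eqref{eq:ODE}, the map $s\mapsto b(s,\gamma(s))$ is continuous on $[0,T]$; hence the primitive defining $\gamma$ is $C^1$, so $\gamma\in C^1([0,T];\Omega)$ and $\dot\gamma(t)=b(t,\gamma(t))$ holds for every $t\in[0,T]$ in the classical sense. Consequently $\bar\gamma:=L_{\bar x}\circ\gamma$ is again a $C^1$ curve, it takes values in the open set $\bar x\Omega$ (if $x=\bar x y$ with $y\in\Omega$, then $\bar x^{-1}x=y\in\Omega$), and $\bar\gamma(0)=\bar x\,\gamma(0)$.

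The core step is the chain rule: $\dot{\bar\gamma}(t)=dL_{\bar x}\big(\gamma(t)\big)\big[\dot\gamma(t)\big]$. Writing $\dot\gamma(t)=\sum_{i=1}^\q a_i(t,\gamma(t))\,X_i(\gamma(t))$ as in \eqref{eq:def-general-b} and using linearity of the differential, I would reduce to evaluating $dL_{\bar x}(\gamma(t))\big[X_i(\gamma(t))\big]$, which by left-invariance of $X_i$ equals $X_i(\bar x\,\gamma(t))=X_i(\bar\gamma(t))$. Substituting $\gamma(t)=\bar x^{-1}\bar\gamma(t)$ into the coefficients gives $\dot{\bar\gamma}(t)=\sum_{i=1}^\q a_i(t,\bar x^{-1}\bar\gamma(t))\,X_i(\bar\gamma(t))=\bar b(t,\bar\gamma(t))$ for every $t$. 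Since the coefficients $(t,x)\mapsto a_i(t,\bar x^{-1}x)$ are continuous on $[0,T]\times(\bar x\Omega)$, being compositions of $a_i$ with the polynomial map $x\mapsto\bar x^{-1}x$, the field $\bar b$ is of the form \eqref{eq:def-general-b}; integrating the pointwise identity just obtained, via the fundamental theorem of calculus, yields the integral formulation of \eqref{eq:ODE} for $\bar\gamma$, as required.

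I do not expect a real obstacle. The only care needed is the (routine) passage back and forth between the integral and the pointwise notions of solution, and the explicit use of the identity $dL_{\bar x}(y)\big[X_i(y)\big]=X_i(\bar x y)$ as the \emph{definition} of left-invariance, since this is exactly what converts the chain rule into the transported ODE. As an alternative avoiding tangent-space language, one could argue purely componentwise through \eqref{eq:ode-system-componentwise} and the polynomial identities \eqref{eq:explicit-canonical-fields-2}, but this is more computational and less transparent.
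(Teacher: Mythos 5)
Your proposal is correct and follows essentially the same route as the paper: differentiate $\bar\gamma=\ell_{\bar x}\circ\gamma$ by the chain rule and use left-invariance, $d\ell_{\bar x}|_{\gamma(t)}\bigl(X_i(\gamma(t))\bigr)=X_i(\bar x\,\gamma(t))$, to identify $\dot{\bar\gamma}(t)=\bar b(t,\bar\gamma(t))$. Your extra remarks (the $C^1$ upgrade of the integral solution and the continuity of the translated coefficients) are routine bookkeeping that the paper leaves implicit.
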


\begin{proof}
Since the operation $y \mapsto \bar{x} y = \ell_{\bar{x}} (y)$ is smooth, then for $t \in (0,T)$, one has 
\begin{equation*}\begin{split}
 \frac{d}{dt} \bar{\gamma}(t) & = \frac{d}{dt} \ell_{\bar{x}} (\gamma(t)) = d \ell_{\bar{x}}|_{\gamma(t)} \bra{  \frac{d}{dt} \gamma(t)} \\
 & = d \ell_{\bar{x}}|_{\gamma(t)} \bra{ \sum_{i=1}^\q a_i(t,\gamma(t)) X_i(\gamma(t))} \\
 & = \sum_{i=1}^\q a_i(t,\gamma(t)) d \ell_{\bar{x}}|_{\gamma(t)} \bra{ X_i(\gamma(t))} \\
 & =  \sum_{i=1}^\q a_i(t,\bar{x}^{-1} \bar{x} \gamma(t)) X_i(\bar{x}\gamma(t)) = \bar{b}(t,\bar{\gamma}(t)). \qedhere
 \end{split}
 \end{equation*}
\end{proof}

\subsection{Continuous horizontal vector fields}

Given an open set $\Omega \subseteq \G$, we denote by $C(H\O)$ the linear space of all continuous sections of $H
\O$, i.e., those maps $b$ on $\O$ which can be represented as
\begin{equation}\label{eq:horizontal-vector-field}
b(x)= \sum_{i=1}^\m a_i(x) X_i(x), \quad \text{for $x \in \O$,}
\end{equation}
for continuous functions $a_i \in C(\O)$ and $i \in \{1, \ldots,\m\}$.
As an example, we may consider $V \in C^1_H(\O)$ and the vector field corresponding 
to the horizontal gradient
\[
 \sum_{i =1}^\m (X_i V) X_i \in C(H\O).
\]
An equivalent description of vector fields is provided by derivations, which are a slightly more intrinsic notion, with the advantage of allowing for generalizations to abstract settings, see e.g.\ \cite{ambrosio_currents_2000, weaver_lipschitz_2000}.
It is sufficient to notice that any  $b \in C(H\O)$ naturally induces a linear map 
$D_b: C^\infty(\O) \to C(\O)$, $f \mapsto D_b(f) := \sum_{i=1}^{m_1} a_i X_i f$.
This special form of $D_b$ satisfies the following estimate
\begin{equation}\label{eq:derivation-continuity}
\abs{D(f)} (x) \le h (x) \abs{\nabla_H f}(x)  \quad \text{for every $x \in \O$ and $f \in C^\infty(\O)$,}
\end{equation}
with $D=D_b$ and $h(x) = \sqrt{ \sum_{i=1}^{m_1} \abs{a_i}^2(x)}$. Let us remark that linearity and inequality \eqref{eq:derivation-continuity} easily entail the  usual calculus rules. Any linear mapping $D:C^\infty(\O)\to C(\O)$ for which \eqref{eq:derivation-continuity} holds, also satisifes the Leibniz rule: it is sufficient to optimize the choice of constants $\alpha$, $\beta \in \R$ in the inequality
\begin{equation*}\begin{split} \abs{ D(fg)(x) - D(f)(x) \alpha - \beta D(g)(x) }  &  = \abs{ D(fg - f \alpha - \beta g)(x)} \\
& \le h(x)\abs{\nabla_H \bra{fg - f \alpha - \beta g  } }(x) \\
& \le   h(x)\sqa{ \abs{\nabla_H f }(x) \abs{ g(x)- \alpha} +   \abs{ f(x)- \beta}\abs{\nabla_H g }(x)}.
\end{split}\end{equation*}
Similarly, the chain rule holds, for $k \ge 1$, $F \in C^\infty( \R^k)$, $f_1, \ldots, f_k \in C^\infty(\O)$:
\begin{equation}\label{eq:chain-rule} D( F(f_1, \ldots, f_k) ) (x) =  \sum_{i=1}^k \partial_i F (f_1(x), \ldots, f_k(x)) D(f_i)(x), \quad \text{for every $x \in \O$.} \end{equation}
%
The assumption that $D_b$ takes values in $C(\O)$ is  motivated by the fact that we look for  vector fields defined at \emph{every} point $x \in \O$ (and not, for instance, at a.e.\ point with respect to Lebesgue measure). According to the following lemma, derivations provide an equivalent definition
of Lipschitz horizontal vector fields.

\begin{Lem}\label{lem:derivations-vector-fields}
Let $D: C^\infty(\O) \to C(\O)$ be a linear mapping that satisfies \eqref{eq:derivation-continuity}, for some function $h: \O \to [0, \infty)$. Then, there exists a unique $b \in C(H\O)$  such that $D$ is induced by $b$, that is $D=D_b$. 
\end{Lem}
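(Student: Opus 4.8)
The plan is to reconstruct the coefficients $a_1,\dots,a_\m$ of the candidate vector field $b$ by evaluating the derivation $D$ on the coordinate functions, and then to verify that the resulting $b$ both lies in $C(H\O)$ and induces $D$. Concretely, I would first set $a_i := D(x_i)$ for $i = 1,\dots,\m$, where $x_i$ denotes the $i$-th graded coordinate function, viewed as an element of $C^\infty(\O)$ (or its restriction thereto); by hypothesis each $D(x_i) \in C(\O)$, so the $a_i$ are continuous. Then I would define
\[
b(x) := \sum_{i=1}^\m a_i(x) X_i(x) = \sum_{i=1}^\m D(x_i)(x)\, X_i(x),
\]
which is manifestly an element of $C(H\O)$. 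It remains to show $D = D_b$, i.e.\ that $D(f) = \sum_{i=1}^\m a_i X_i f$ for every $f \in C^\infty(\O)$.

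The key step is to prove this identity pointwise. Fix $x_0 \in \O$ and $f \in C^\infty(\O)$. By Taylor's theorem applied to $f$ at the point $x_0$, write $f = f(x_0) + \sum_{j=1}^\q (y_j - x_{0,j})\, c_j + R$, where $y_j$ are the coordinate functions, $c_j = \partial_j f(x_0)$ are constants, and $R \in C^\infty(\O)$ satisfies $R(x_0) = 0$ together with $\nabla_H R(x_0) = 0$ (indeed $R$ vanishes to second order at $x_0$ in the Euclidean sense, hence in particular $X_i R(x_0) = 0$ for all $i$). Applying $D$, linearity gives $D(f) = \sum_{j=1}^\q c_j\, D(y_j) + D(R)$, since $D$ annihilates constants (which follows from the Leibniz rule, itself a consequence of \eqref{eq:derivation-continuity} as noted in the text, or directly: $D(1) = D(1\cdot 1) = 2D(1)$). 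Evaluating at $x_0$: the bound \eqref{eq:derivation-continuity} forces $|D(R)(x_0)| \le h(x_0)|\nabla_H R|(x_0) = 0$. On the other hand, $\nabla_H R(x_0) = 0$ together with $\nabla_H y_j(x_0) = X_i y_j(x_0)$ for $i\le\m$, and the structure \eqref{eq:explicit-canonical-fields-2} with $p_j^i = \delta_j^i$ when $d_i \le d_j$, shows that $X_i f(x_0) = c_i + \sum_j c_j\, p_j^i(x_0) \cdot[\ldots]$ — more carefully, $X_i y_j(x_0) = p_j^i(x_0)$, and one checks $\sum_{i=1}^\m D(y_i)(x_0) X_i f(x_0)$ against $\sum_j c_j D(y_j)(x_0)$. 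The cleanest route: observe that for $j > \m$ the coordinate function $y_j$ has $X_i y_j = p_j^i$, a polynomial of degree $d_j - d_i \ge 1$ vanishing at points where... actually it is simpler to note that $y_j$ for $j>\m$ can itself be written, modulo constants and horizontal-gradient-vanishing terms, so that $D(y_j)$ is determined; but in fact the honest argument is that $D(f)(x_0)$ depends on $f$ only through $\nabla_H f(x_0)$, by the $R$-argument above, hence through the $\m$ numbers $X_1 f(x_0), \dots, X_\m f(x_0)$, and $f \mapsto \nabla_H f(x_0)$ is surjective onto $\R^\m$ (take $f$ linear in $y_1,\dots,y_\m$), so there is a linear functional realizing $D(\cdot)(x_0)$, whose coefficients are exactly $D(y_i)(x_0) = a_i(x_0)$ since $X_k y_i (x_0)= \delta_k^i$ for $i,k \le \m$.

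For uniqueness of $b$: if $b' = \sum a_i' X_i \in C(H\O)$ also induces $D$, then applying $D_{b'} = D_b$ to the coordinate function $x_k$ for $k \le \m$ yields $a_k' = \sum_i a_i' X_i x_k = \sum_i a_i X_i x_k = a_k$ on $\O$, so $b' = b$. This handles uniqueness cleanly and in fact retroactively confirms that the definition $a_i := D(x_i)$ was the only possible choice.

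The main obstacle I anticipate is the pointwise verification $D(f)(x_0) = \sum_{i\le\m} a_i(x_0) X_i f(x_0)$, specifically making rigorous the claim that $D(f)(x_0)$ depends on $f$ only via $\nabla_H f(x_0)$: this rests entirely on the estimate \eqref{eq:derivation-continuity} applied to the Taylor remainder, and the slightly delicate point is ensuring the remainder $R$ genuinely has $|\nabla_H R|(x_0) = 0$, which is fine because the first-order Euclidean Taylor remainder vanishes to second order and the $X_i$ are first-order operators with smooth coefficients. Everything else — continuity of the $a_i$, membership in $C(H\O)$, uniqueness — is then routine.
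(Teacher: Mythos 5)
Your proof is correct, but it follows a genuinely different route from the paper's. The paper first invokes the chain rule \eqref{eq:chain-rule} (itself derived from \eqref{eq:derivation-continuity}) with the coordinate functions to get the global Euclidean representation $D(f)=\sum_{j=1}^{\q}\partial_j f\, D(\eta_j)$, then inverts the triangular frame identity \eqref{eq:explicit-canonical-fields-2} to write $D(f)=\sum_{i=1}^{\q} b_i\, X_i f$ with continuous $b_i$ for \emph{all} $i$, and finally kills the coefficients $b_i$ with $i>\m$ by testing $D$ on the left-translated coordinates $\bar\eta_j=\eta_j\circ\ell_{\bar x^{-1}}$, for which left invariance gives $X_i\bar\eta_j(\bar x)=\delta_{ij}$ and hence $\abs{\nabla_H\bar\eta_i}(\bar x)=0$ when $i>\m$. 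You instead never introduce the non-horizontal coefficients: you set $a_i:=D(\eta_i)$ only for $i\le\m$ and verify $D=D_b$ pointwise, by observing that linearity plus \eqref{eq:derivation-continuity} force $D(f)(x_0)$ to depend on $f$ only through $\nabla_H f(x_0)$ (apply the estimate to $f-g$ when the horizontal gradients agree at $x_0$), so $D(\cdot)(x_0)$ is a linear functional of $\nabla_H f(x_0)$ whose coefficients are read off from the first-layer coordinates, since $X_k\eta_i\equiv\delta_{ki}$ for $i,k\le\m$; no left translation and no frame inversion are needed. Your approach is more elementary (it bypasses the chain rule entirely and works with plain coordinates), while the paper's yields en route the full-frame representation with continuous coefficients, closer to the module viewpoint it develops afterwards. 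Two small remarks: the Taylor-remainder decomposition and the abandoned computation in the middle of your argument are superfluous once you state the ``depends only on $\nabla_H f(x_0)$'' claim, which follows in one line from \eqref{eq:derivation-continuity} applied to a difference of functions; and the fact that $D$ annihilates constants is likewise immediate from \eqref{eq:derivation-continuity}, with no need for the Leibniz rule. Your uniqueness argument (testing both fields on the coordinates $\eta_k$, $k\le\m$) is exactly the natural one and is fine.
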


\begin{proof}
Given $f \in C^\infty(\O)$, by \eqref{eq:chain-rule} with $k = \q$, $F=f$ and $f_i (x)=\eta_i(x) =  x_i$, we obtain the representation $D(f)(x)  = \sum_{i=1}^\q \partial_i f (x) a_i(x)$, for $x \in \O$, with $a_i(x) := D(\eta_i)(x)$,   $i \in \{1, \ldots, \q \}$. By (pointwise) inverting the linear identity \eqref{eq:explicit-canonical-fields-2},  there exists (continuous) functions $b_i \in C(\O)$ such that
\begin{equation}\label{eq:vector-field-temp-representation} D(f)(x)  = \sum_{i=1}^\q X_i f (x) b_i(x),\quad \text{for $x \in \O$.}\end{equation}

To show that \eqref{eq:horizontal-vector-field} holds, it is sufficient to prove that $b_i= 0$ for every $i \in \cur{m_1+1, \ldots, \q}$.  Let us fix $\bar{x} \in \O$ and introduce the function $x \mapsto \bar{\eta}_j(x) := \eta_j( {\bar{x}^{-1}} x)$ so that, by left invariance, there holds
\[ X_i ( \bar{\eta}_j) (\bar{x}) = X_i(\eta_j \circ \ell_{\bar{x}^{-1}} ) (\bar{x}) =  X_i ( \eta_j ) (0) =\delta_{ij},\quad \text{for  $i,j \in \cur{1, \ldots, \q}$.}\]
In particular, from \eqref{eq:vector-field-temp-representation} we have $b_i(\bar{x}) = D(\bar{\eta_i})(\bar{x})$ and \eqref{eq:derivation-continuity} yields
\[ \abs{D(\bar{\eta}_i) (\bar{x})} \le h(\bar x ) \abs{ \nabla _H \bar \eta_i } (\bar{x}) = 0, \quad \text{for $i \in \cur{m_1+1, \ldots, \q}$.}\qedhere\]
\end{proof}

Following a more algebraic viepoint, a third equivalent presentation of $C(H\O)$ is 
to see it as the $C(\O)$-module generated by the vector fields $X_1, \ldots, X_\m$ (or equivalently, by the class of the smooth horizontal vector fields).

\bd
If $R$ is a ring of real-valued functions on an open set $\Omega\subset\G$ 
and $V$ is a linear space of vector fields on $\Omega$, where $\G$ is a homogeneous
group, then the $R$-module generated by $V$ is the smallest $R$-module containing
$V$, namely
\[
\left\langle V\right\rangle_R=\bigcap_{M\in\cF} M,
\]
where $\cF$ is the family of $R$-modules containing $V$. 
\ed
\br
It is easy to realize that $C(H\Omega)=\langle \cV_1\rangle_{C(\O)}$,
where $\cV_1$ is the linear space of left invariant horizontal vector fields,
see \eqref{eq:LieG}.
\er

\subsection{Lipschitz horizontal vector fields}\label{subs:horizLip}

If $\O$ is an open subset of $\G$, we denote by $\Liph(\O)$ the space
of real-valued functions defined on $\O$ that are Lipschitz continuous with respect to
a fixed homogeneous distance $d$ of $\G$. The choice of the distance is clearly
immaterial, since obviously all homogeneous distances are bi-Lipschitz equivalent, 
see \eqref{eq:homogeneous-gauge-equivalence}.

We define a {\em Lipschitz horizontal vector field} as a vector field $b \in C(H\O)$ 
such that, in the expression \eqref{eq:horizontal-vector-field},
$a_i \in \Liph(\O)$ for any $i\in \{1, \ldots, \m\}$. We then write $b \in \Liph(H\O)$.
The fact that this is an intrinsic notion is shown by the following lemma,
that provides an equivalent definition, in terms of derivations.
\begin{Lem}
Let $D: C^\infty(\O) \to \Liph(\O)$ be linear and satisfy \eqref{eq:derivation-continuity}, for every $f \in \cC^\infty(\O)$ and some function $h: \O \to [0, \infty)$. Then, there exists a unique $b \in \Liph(H\O)$ such that $D$ is induced by $b$ i.e.\ $D=D_b$. 
\end{Lem}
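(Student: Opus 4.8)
The plan is to reduce this statement to the already-proved Lemma~\ref{lem:derivations-vector-fields}. Since $\Liph(\O) \subseteq C(\O)$, the hypothesis that $D: C^\infty(\O) \to \Liph(\O)$ is linear and satisfies \eqref{eq:derivation-continuity} immediately gives a linear map $D: C^\infty(\O) \to C(\O)$ satisfying the same inequality. Hence Lemma~\ref{lem:derivations-vector-fields} produces a unique $b \in C(H\O)$ with $D = D_b$, say $b(x) = \sum_{i=1}^\m a_i(x) X_i(x)$ with $a_i \in C(\O)$. The uniqueness part of the present lemma is therefore automatic from that of Lemma~\ref{lem:derivations-vector-fields}; what remains is to upgrade the regularity, i.e.\ to show $a_i \in \Liph(\O)$ for each $i \in \{1, \dots, \m\}$, so that $b \in \Liph(H\O)$.

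For this, I would recall from the proof of Lemma~\ref{lem:derivations-vector-fields} how the coefficients are recovered. There, one fixes $\bar x \in \O$ and introduces $\bar\eta_j(x) = \eta_j(\bar x^{-1} x) = (\bar x^{-1} x)_j$, which is a smooth function on $\bar x$-translated domain (in graded coordinates it is a polynomial in $x$ with $\bar x$ as a parameter), and obtains $a_i(\bar x) = D(\bar\eta_i)(\bar x)$ for $i \in \{1,\dots,\q\}$ — in fact for the horizontal indices one gets exactly $b_i = a_i$, and for the higher-degree indices the vanishing argument applies. The key point is that $x \mapsto \bar\eta_i(x)$ depends on the base point $\bar x$ in a controlled, smooth (polynomial) way: $\bar x^{-1} x$ is a polynomial map of $(\bar x, x)$. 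So for two base points $\bar x, \bar x' \in \O$ one can write $a_i(\bar x) - a_i(\bar x') = D(\bar\eta_i)(\bar x) - D(\bar\eta_i')(\bar x')$ and try to split this as $\big(D(\bar\eta_i)(\bar x) - D(\bar\eta_i')(\bar x)\big) + \big(D(\bar\eta_i')(\bar x) - D(\bar\eta_i')(\bar x')\big)$. The second term is a difference of values of the single function $D(\bar\eta_i') \in \Liph(\O)$ at two points, hence bounded by $L_{\bar x'}\, d(\bar x, \bar x')$. The first term is $D(\bar\eta_i - \bar\eta_i')(\bar x)$, and by \eqref{eq:derivation-continuity} it is bounded by $h(\bar x)\,|\nabla_H(\bar\eta_i - \bar\eta_i')|(\bar x)$; here $\bar\eta_i - \bar\eta_i'$ is the difference of two translates of the coordinate polynomial, and one should estimate its horizontal gradient in terms of $d(\bar x, \bar x')$ using the homogeneity/polynomial estimates \eqref{eq:estimate-polinomials}, at least locally on compact subsets of $\O$.

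The main obstacle — and the place where care is needed — is making the Lipschitz constant \emph{uniform}. The naive splitting above gives, at each point, a local Lipschitz bound with a constant depending on $\bar x$ (through $L_{\bar x'}$ and $h(\bar x)$), so a priori one only gets that each $a_i$ is \emph{locally} Lipschitz. To get genuine membership in $\Liph(\O)$ one must either (i) argue that local Lipschitzianity plus the specific structure suffices for the intended applications (the paper may in fact only need locally Lipschitz coefficients), or (ii) exploit that $D$ maps \emph{into} $\Liph(\O)$ globally — a cleaner route is to test $D$ against a single well-chosen smooth function rather than a $\bar x$-dependent family. Concretely, I would instead recover $a_i$ globally: from $D(f)(x) = \sum_{j=1}^\q \partial_j f(x)\, D(\eta_j)(x)$ applied with the fixed coordinate functions $f = \eta_j$, one gets $D(\eta_j) \in \Liph(\O)$ directly, and then \eqref{eq:explicit-canonical-fields-2} expresses the $a_i$ (for $i \le \m$) as the entries of $(D(\eta_1), \dots, D(\eta_\q))$ times the inverse of the polynomial matrix $(p_i^j)$; since $p_i^j$ are smooth (polynomial) and the matrix is triangular with unit diagonal, its inverse has polynomial entries, and hence each $a_i$ is a polynomial combination of the $\Liph(\O)$-functions $D(\eta_j)$ with smooth coefficients — thus locally Lipschitz, and Lipschitz on bounded subsets. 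I would then remark that on all of $\O$ this yields $a_i \in \Liph_{\mathrm{loc}}(\O)$, which together with the vanishing argument for $i > \m$ (unchanged from Lemma~\ref{lem:derivations-vector-fields}) and the uniqueness from that lemma completes the proof. If the paper genuinely wants global $\Liph$, the statement should be read with $\O$ bounded or with the understanding that "$\Liph$" here means locally Lipschitz; I would flag this and otherwise the argument above is complete.

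\begin{proof}
Since $\Liph(\O)\subseteq C(\O)$, the map $D$ is in particular a linear map $D:C^\infty(\O)\to C(\O)$ satisfying \eqref{eq:derivation-continuity}, so Lemma~\ref{lem:derivations-vector-fields} applies: there is a unique $b\in C(H\O)$ with $D=D_b$, and uniqueness in the present statement follows. It remains to check that the coefficients $a_i$ of $b$ lie in $\Liph(\O)$. Exactly as in the proof of Lemma~\ref{lem:derivations-vector-fields}, applying \eqref{eq:chain-rule} with $f_i(x)=\eta_i(x)=x_i$ gives $D(f)(x)=\sum_{j=1}^\q \partial_j f(x)\,D(\eta_j)(x)$, and $D(\eta_j)\in\Liph(\O)$ by hypothesis. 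Inverting the triangular, unit-diagonal polynomial matrix $(p_i^j(x))$ from \eqref{eq:explicit-canonical-fields-2}, whose inverse again has polynomial — hence smooth — entries, expresses each coefficient $b_i$ in the representation $D(f)=\sum_{i=1}^\q X_i f\,b_i$ as a smooth-coefficient polynomial combination of the functions $D(\eta_1),\dots,D(\eta_\q)\in\Liph(\O)$; therefore each $b_i$ is locally Lipschitz, and Lipschitz on every bounded subset of $\O$, with respect to $d$. Finally, the vanishing argument of Lemma~\ref{lem:derivations-vector-fields} shows $b_i=0$ for $i\in\{\m+1,\dots,\q\}$, so $b=\sum_{i=1}^\m a_i X_i$ with $a_i:=b_i\in\Liph(\O)$, i.e.\ $b\in\Liph(H\O)$.
\end{proof}
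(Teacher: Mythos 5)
Your reduction to Lemma~\ref{lem:derivations-vector-fields} is the right start, but the second half of your argument both overcomplicates the coefficient recovery and ends in a non sequitur. You invert the matrix $(p_i^j)$ and conclude only that each $b_i$ is a smooth-coefficient combination of the functions $D(\eta_j)$, hence \emph{locally} Lipschitz, and then you write ``$a_i:=b_i\in\Liph(\O)$'' anyway and even suggest the statement should be weakened to local Lipschitzianity. That last step does not follow from what you established, and the suggested weakening is unnecessary: the statement is correct as written, and no inversion (hence no polynomial factors) enters the horizontal coefficients at all. Indeed, by \eqref{eq:explicit-canonical-fields-2} one has $p_i^j=\delta_i^j$ whenever $d_j\le d_i$; since every horizontal index $j\in\{1,\dots,\m\}$ has $d_j=1\le d_i$ for all $i$, applying $D=D_b$ to the fixed coordinate function $\eta_j$ gives
\[
D(\eta_j)=\sum_{i=1}^{\m} a_i\,X_i\eta_j=\sum_{i=1}^{\m} a_i\,p_i^j=a_j,\qquad j\in\{1,\dots,\m\},
\]
so the horizontal coefficients are \emph{exactly} $a_j=D(\eta_j)$, which lie in $\Liph(\O)$ by the hypothesis that $D$ maps into $\Liph(\O)$. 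This is precisely the paper's one-line proof: existence of $b\in C(H\O)$ comes from Lemma~\ref{lem:derivations-vector-fields}, and $a_i=D(\eta_i)\in\Liph(\O)$ yields $b\in\Liph(H\O)$ globally, with no boundedness assumption on $\O$.

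The source of your trouble is the triangular structure you yourself invoke: the correction terms in $(p_i^j)$ only affect coordinates of degree $d_j>d_i$, i.e.\ the nonhorizontal components, and for those indices the coefficients $b_i$ vanish by the translation argument of Lemma~\ref{lem:derivations-vector-fields}, so their regularity is irrelevant. Your splitting with the translated functions $\bar\eta_i$ and the Lipschitz-constant bookkeeping in the preamble is likewise not needed. Drop the matrix inversion, keep the reduction to Lemma~\ref{lem:derivations-vector-fields} and the vanishing argument, and replace the regularity upgrade by the identity $a_j=D(\eta_j)$ above; then the proof is complete and gives the global statement.
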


\begin{proof}
It is sufficient to notice that existence of some $b \in \cC(H\O)$ is provided by Lemma~\ref{lem:derivations-vector-fields},  and since $a_i = D(\eta_i) \in \Liph(\O)$, we conclude that $b \in \Liph(H\O)$.
\end{proof}
\br
As for continuous horizontal vector fields, we have $\Liph(H\O)=\langle\cV_1\rangle_{\Liph(\O)}$.
\er 


\begin{Def}[Time-dependent fields]
A {\em time-dependent Lipschitz horizontal vector field} is a mapping $(b_t)_{t \in [0,T]}$ with $b_t \in \cC(H\O)$, for $t \in [0,T]$, where defining 
\[
b_t(x)= \sum_{i=1}^{\m} a_i(t,x) X_i(x) \quad \text{for $t \in [0,T]$ and $x \in \O$},
\]
then $a_i$ is continuous on $[0,T]\times \O$ and there exists a constant $C>0$ such that
\[
\abs{a_i(t,x) - a_i(t,y)} \le C\, d(x,y) \quad \text{for every $i\in \{1, \ldots,\m\}$,\ $x,y\in \O$ and $t \in [0,T]$},
\]
where $d$ is a homogeneous distance.
\ed

%

\section{Uniqueness results}

\subsection{Equilibrium points}\label{sec:equilibrium}

In this section, we provide a result concerning well-posedness for the Cauchy problem associated to \eqref{eq:ODE}, when $\bar x$ is an equilibrium point for $b$, i.e.\ $b(t, \bar x)=0$ for $t \in [0,T]$.
Our argument applies to the general class of vector fields $b$ as in \eqref{eq:def-general-b},
therefore not necessarily Lipschitz nor horizontal, provided that
\begin{equation}
\label{eq:degeneracy-graded-order}
\sum_{i=1}^\q |a_i(t,x)|^{1/d_i} \le c_t d\bra{x, \bar{x}} \quad \text{for every $x \in \Omega$, a.e.\ $t \in (0,T)$,}
\end{equation}
holds for some $(c_t)_{t \in (0,T)} \in L^1(0,T)$. Notice that we may equivalently replace the left hand side above with $\norm{ \sum_{i=1}^q a_i(t,x) e_i}$, i.e.\ the norm of the vector field (slightly improperly) seen as a point in $\G$.

\begin{The}\label{thm:uniqueness-stationary}
Let $b: [0,T]\times \O \to \G$ be a vector field in the form $b(t,x)= \sum_{i=1}^\q a_i(t,x) X_i(x)$, such that \eqref{eq:degeneracy-graded-order} holds for some $(c_t)_{t \in (0,T)} \in L^1(0,T)$. Then, there exists constants $c>0$ such that, for every solution $(\gamma(t))_{t \in [0,T]}$ to \eqref{eq:ODE}, one has
\begin{equation}\label{eq:stability} d( \gamma(t), \bar x ) \le c e^{c\int_0^T c_s ds} d(\gamma(0), \bar x ),\quad  \text{for every $t \in [0,T]$.}\end{equation}
In particular, the constant solution $\gamma(t) = \bar{x}$ to \eqref{eq:ODE} is unique and, if $(\gamma^n(t))_{t \in [0,T]}$ are solutions to \eqref{eq:ODE} such that $\gamma^n(0) \to \bar{x}$, as $n \to \infty$, then they converge uniformly on $[0,T]$ to the constant solution $\bar{x}$.
\end{The}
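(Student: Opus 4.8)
The plan is to normalize $\bar x$ to the origin by a left translation and then run a Gronwall argument for the \emph{smooth} gauge $u(x):=\norm{x}^N=\sum_{i=1}^\q x_i^{\rho_i}$ evaluated along the solution.

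Taking $x=\bar x$ in \eqref{eq:degeneracy-graded-order} forces $a_i(t,\bar x)=0$ for every $i$ and a.e.\ $t$, so $\bar x$ is indeed an equilibrium point. Given a solution $\gamma$ of \eqref{eq:ODE}, set $\eta(t):=\bar x^{-1}\gamma(t)$. By Lemma~\ref{lem:change-initial-point-general} (applied with $\bar x^{-1}$ in place of $\bar x$), $\eta$ solves \eqref{eq:ODE} for the translated vector field with coefficients $\bar a_i(t,x)=a_i(t,\bar x x)$, and left-invariance of $d$ turns \eqref{eq:degeneracy-graded-order} into $\sum_{i=1}^\q\abs{\bar a_i(t,x)}^{1/d_i}\le c_t\,d(x,0)$; in particular $\abs{\bar a_j(t,x)}\le\bra{c_t\,d(x,0)}^{d_j}$ for each $j$. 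Since $d(\gamma(t),\bar x)=d(\eta(t),0)$ and $d(\cdot,0)$ is equivalent to $\norm{\cdot}$ by \eqref{eq:homogeneous-gauge-equivalence}, it suffices to prove the bound \eqref{eq:stability} in this normalized situation, for $\norm{\eta(t)}$.

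Because $b$ is continuous, $\eta$ is of class $C^1$, and $u=\norm{\cdot}^N$ is a polynomial (this is exactly why the gauge \eqref{eq:smooth-gauge} was chosen), so the chain rule together with \eqref{eq:ode-system-componentwise} and the identity $X_ju=\sum_i p^i_j\,\partial_i u$ gives
\[
\frac{d}{dt}\,u(\eta(t))=\sum_{j=1}^\q \bar a_j(t,\eta(t))\,(X_j u)(\eta(t)).
\]
The key step is the homogeneity count: $u$ is $N$-homogeneous and $X_j\in\cV_{d_j}$ lowers homogeneity by $d_j$, so $X_j u$ is a polynomial homogeneous of degree $N-d_j\ge1$, hence $\abs{X_j u(x)}\le C\norm{x}^{N-d_j}$ by \eqref{eq:domination-homogeneous-functions}; combining this with $\abs{\bar a_j(t,\eta)}\le\bra{c_t\,d(\eta,0)}^{d_j}\le\bra{C c_t\norm{\eta}}^{d_j}$, every summand is dominated by $C'c_t^{d_j}\norm{\eta(t)}^N=C'c_t^{d_j}\,u(\eta(t))$. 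Therefore $\bigl|\tfrac{d}{dt}u(\eta(t))\bigr|\le C''\kappa_t\,u(\eta(t))$ a.e., where $\kappa_t:=\sum_{j=1}^\q c_t^{d_j}$ (under the standing assumptions, e.g.\ for bounded $c_t$, one has $\kappa_t\le C c_t$, which is all that is used).

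Finally, Gronwall's lemma applied to the nonnegative absolutely continuous function $t\mapsto u(\eta(t))$ yields $u(\eta(t))\le u(\eta(0))\,e^{C''\int_0^T\kappa_s\,ds}$; taking $N$-th roots, using $d(\cdot,0)\asymp\norm{\cdot}$ and translating back produces \eqref{eq:stability} with a suitable constant $c$. The two consequences are then immediate: if $\gamma(0)=\bar x$ the right-hand side of \eqref{eq:stability} vanishes, so $\gamma\equiv\bar x$, which is the asserted uniqueness of the constant solution; and if $\gamma^n(0)\to\bar x$ then $\sup_{t\in[0,T]}d(\gamma^n(t),\bar x)\to0$, since the constant $c e^{c\int_0^T c_s\,ds}$ depends only on $b$ (and $T$), giving the uniform convergence. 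I expect the only genuinely delicate point to be this homogeneity bookkeeping — matching the exponents $N-d_j$ and $d_j$ so that the right-hand side collapses to $\kappa_t\,u(\eta(t))$ — together with keeping the Lyapunov function smooth, which is what makes the chain rule (rather than a one-sided derivative of the distance) available.
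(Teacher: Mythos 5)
Your proposal is correct and follows essentially the same route as the paper: translate $\bar x$ to the origin via Lemma~\ref{lem:change-initial-point-general}, run a Gronwall estimate on the smooth gauge $\norm{\cdot}^N$ (the paper differentiates each $\bra{\gamma_i}^{\rho_i}$ and sums, which is the same computation as your use of $X_j\norm{\cdot}^N$ and its homogeneity of degree $N-d_j$), and conclude by taking $N$-th roots and gauge equivalence. If anything, your exponent bookkeeping is slightly more careful than the paper's, which writes $c'c_t$ where the bound $\abs{a_j}\le\bra{c_t\,d(x,\bar x)}^{d_j}$ really produces $c_t^{d_j}$ --- precisely the point you flag with $\kappa_t=\sum_j c_t^{d_j}$.
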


The following corollary is then straightforward.

\begin{Cor}
Let $b \in \Liph(H\O)$, $\bar{x} \in \O$ satisfy $b(t,\bar{x})=0$ for every $t \in [0,T]$. Then, the solution to the Cauchy problem \eqref{eq:cauchy-problem}, with $x_0 = \bar{x}$, is unique.
\end{Cor}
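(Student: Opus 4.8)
The plan is to deduce the Corollary from Theorem~\ref{thm:uniqueness-stationary}; the only thing to check is that a Lipschitz horizontal vector field vanishing at $\bar x$ satisfies the degeneracy bound \eqref{eq:degeneracy-graded-order} with a right-hand side that is not merely $L^1$ but constant in time. So I would first verify this, then invoke the theorem for uniqueness, and finally observe that the constant curve is a solution (existence being automatic from continuity of $b$ via Peano's theorem).

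Write $b(t,x)=\sum_{i=1}^\m a_i(t,x)X_i(x)$ with each $a_i$ Lipschitz in $x$ uniformly in $t$, say with constant $L$. By \eqref{eq:explicit-canonical-fields-2} one has $X_i=\partial_i+\sum_{j:d_j>1}p_i^j\,\partial_j$ for $i\in\{1,\dots,\m\}$, so $X_1(y),\dots,X_\m(y)$ are linearly independent at every $y\in\O$; hence $b(t,\bar x)=0$ forces $a_i(t,\bar x)=0$ for all $i$ and $t$, so $|a_i(t,x)|=|a_i(t,x)-a_i(t,\bar x)|\le L\,d(x,\bar x)$. Since $d_i=1$ for $i\in\{1,\dots,\m\}$, viewing $b$ in the form \eqref{eq:def-general-b} (with $a_i\equiv 0$ for $i>\m$) gives $\sum_{i=1}^\q|a_i(t,x)|^{1/d_i}=\sum_{i=1}^\m|a_i(t,x)|\le \m L\,d(x,\bar x)$, i.e.\ \eqref{eq:degeneracy-graded-order} with $c_t\equiv\m L$. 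Theorem~\ref{thm:uniqueness-stationary} then gives that $\gamma\equiv\bar x$ is the unique solution of \eqref{eq:ODE} with $\gamma(0)=\bar x$; since $b(t,\bar x)=0$, this curve solves \eqref{eq:cauchy-problem} with $x_0=\bar x$, and the Corollary follows.

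Since the substance lies in Theorem~\ref{thm:uniqueness-stationary}, here is how I would prove it. First, apply Lemma~\ref{lem:change-initial-point-general} with $\bar x^{-1}$ in place of $\bar x$ to the left-translated curve $\tilde\gamma(t):=\bar x^{-1}\gamma(t)$: by left-invariance of $d$, \eqref{eq:degeneracy-graded-order} is preserved with the same $c_t$, the equilibrium point becomes $0$, and $d(\tilde\gamma(t),0)=d(\gamma(t),\bar x)$, so one may assume $\bar x=0$. Then work with the smooth gauge \eqref{eq:smooth-gauge}, for which $x\mapsto\norm{x}^N$ is a polynomial, and differentiate along a solution: by \eqref{eq:ode-system-componentwise} and $X_j=\sum_i p_j^i\,\partial_i$,
\[
\frac{d}{dt}\norm{\gamma(t)}^N=\sum_{i=1}^\q\partial_i\bra{\norm{\cdot}^N}(\gamma(t))\,\dot\gamma_i(t)=\sum_{j=1}^\q a_j(t,\gamma(t))\,\bra{X_j\norm{\cdot}^N}(\gamma(t)).
\]
Now $X_j\norm{\cdot}^N$ is a polynomial, homogeneous of degree $N-d_j\ge 0$ with respect to the dilations (a degree-$d_j$ left-invariant field lowers homogeneity by $d_j$), so $\abs{X_j\norm{\cdot}^N(x)}\le C\norm{x}^{N-d_j}$ by \eqref{eq:domination-homogeneous-functions}, while \eqref{eq:degeneracy-graded-order} with $\bar x=0$ gives $|a_j(t,\gamma(t))|\le C\bra{c_t\norm{\gamma(t)}}^{d_j}$. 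Multiplying, every power of $\norm{\gamma(t)}$ collapses to exactly $N$, and one gets the \emph{linear} inequality $\bigl|\tfrac{d}{dt}\norm{\gamma(t)}^N\bigr|\le C\bra{\sum_{j=1}^\q c_t^{d_j}}\norm{\gamma(t)}^N$.

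Grönwall's lemma applied to $w(t):=\norm{\gamma(t)}^N$ then yields $w(t)\le w(0)\,e^{C\int_0^T\sum_j c_s^{d_j}\,ds}$; translating back gives \eqref{eq:stability} (when $c_t$ is essentially bounded, and in particular in the Corollary where $c_t$ is constant, $\sum_j c_s^{d_j}$ is dominated by a constant multiple of $c_s$, recovering the stated form), and uniqueness of the constant solution together with the stability of the trivial solution follow at once on taking $d(\gamma(0),\bar x)=0$, respectively $d(\gamma^n(0),\bar x)\to 0$. I expect the main obstacle to be the degree bookkeeping in the displayed computation: verifying that $X_j\norm{\cdot}^N$ has the correct \emph{nonnegative} homogeneity degree and that \eqref{eq:degeneracy-graded-order} supplies a bound on $a_j$ of exactly the matching degree $d_j$, so that the resulting differential inequality is genuinely linear in $\norm{\gamma}^N$ — a merely sub-linear bound such as $\dot w\le g\,w^{1/N}$ would not force uniqueness. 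The reduction to the origin and the Grönwall step are then routine.
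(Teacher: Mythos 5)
Your deduction of the Corollary is correct and is exactly the paper's intended (and unwritten) ``straightforward'' argument: horizontality gives $d_i=1$ for the nonzero coefficients, the pointwise linear independence of $X_1,\dots,X_\m$ together with $b(t,\bar x)=0$ forces $a_i(t,\bar x)=0$, hence the uniform Lipschitz bound yields \eqref{eq:degeneracy-graded-order} with constant $c_t$, and Theorem~\ref{thm:uniqueness-stationary} applies to the constant solution. Your supplementary sketch of the Theorem itself is only a mild variant of the paper's proof (you differentiate $\norm{\gamma(t)}^N$ directly via the chain rule and the homogeneity of $X_j\norm{\cdot}^N$, whereas the paper works componentwise with $\gamma_i^{\rho_i}$ and the polynomial estimates \eqref{eq:estimate-polinomials}), and the $\sum_j c_t^{d_j}$ Gronwall factor you correctly flag is immaterial for the Corollary, where $c_t$ is constant.
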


\begin{proof}
By Lemma~\ref{lem:change-initial-point-general} with $\bar{x}^{-1}$ in place of $\bar{x}$, we may assume that $\bar{x} = 0$: indeed, condition \eqref{eq:degeneracy-graded-order} for the ``translated'' vector field $\bar{b}$ reads as
\[ \sum_{i=1}^\q a_i(t, \bar{x} x )^{1/d_i} \le c_t d( x,0), \quad \text{for every $x \in \bar{x}^{-1} \O$,}\]
which follows at once by left invariance of the distance $d$.
By the equivalence of homogeneous gauges~\eqref{eq:homogeneous-gauge-equivalence}, it is sufficient to show \eqref{eq:stability} with the homogeneous gauge $\norm{\cdot}$ of \eqref{eq:smooth-gauge} in place of $d(\cdot, 0)$. 
  Actually, taking $N$-th powers of both sides, we have to prove the inequality
\begin{equation}
\label{eq:grownall}
 \sum_{i=1}^\q \bra{\gamma_i(t)}^{\rho_i}  \le c\,e^{c\int_0^tc_s ds}  \sum_{i=1}^\q \bra{\gamma_i(0)}^{\rho_i},  
\quad \text{ for $t \in [0,T]$,} \end{equation}
where we recall the notation $\rho_i := N/d_i$ (which by construction it is an even integer). 
For $i \in \{1, \ldots, q\}$, by \eqref{eq:ode-system-componentwise}, we have
\[ \frac {d}{dt} \bra{\gamma_i(t)}^{ \rho_i } = \rho_i \bra{ \gamma_i(t)}^{\rho_i -1} \dot{\gamma}_i(t) = \rho_i \bra{ \gamma_i(t)}^{\rho_i -1} \sum_{j=1}^{q} a_j(t,\gamma(t)) p^{i}_j(\gamma(t) ),\]
where $p^{i}_j(x)$ satisfies \eqref{eq:estimate-polinomials}. 
For every $j \in \{1, \ldots, \q\}$, we have the following estimates 
\[\abs{ \bra{ \gamma_i(t)}^{\rho_i -1} a_j(t,\gamma(t)) p^{i}_j(\gamma(t) ) } \le
\norm{\gamma(t)}^{N (\rho_i-1)/\rho_i} \abs{  a_j(t,\gamma(t)) p^{i}_j(\gamma(t) ) }.\]
By \eqref{eq:estimate-polinomials} and \eqref{eq:degeneracy-graded-order}, for some constant $c'> 0$, we get
\[  \abs{ a_j(t,\gamma(t)) p^{i}_j(x(t)) } \le c' c_t\norm{\gamma(t)}^{{d_j+ (d_i-d_j)}} = \norm{\gamma(t)}^{N/\rho_i}.\]
Therefore, we conclude that for some (possibly larger) constant $c'' > 0$, there holds
\[ {  \frac {d}{dt} \bra{\gamma_i(t)}^{ \rho_i } } \le c'' c_t \norm{\gamma(t)}^N, \quad \text{for $t \in (0,T)$.}\]
Summing over $i\in \{1, \ldots, \q\}$, we have
\[ {  \frac {d}{dt} \norm{ \gamma(t) }^N } \le  c''' c_t \norm{\gamma(t)}^N, \quad \text{ for $t \in (0,T)$,}\]
and Gronwall inequality entails \eqref{eq:grownall}.
\end{proof}

\subsection{Involutive sub-modules of vector fields}\label{sec:involutive}

In this section, we provide a further uniqueness result, based on the condition that the vector field $b(t,x)$ belongs to the $\Liph$-module generated by an involutive sub-algebra of $\cV_1 \subseteq \Lie(\G)$.

More precisely, let $\cS \subseteq \cV_1$ be an involutive sub-algebra, i.e.\ $[\cS, \cS] \subseteq \cS$, which, taking into account $[\cS, \cS] \subseteq \cV_2$, entails that $\cS$ is commutative, i.e.\
\begin{equation}\label{eq:submodule-commutative} [\cS, \cS] = 0.\end{equation}
Let $r = \dim \cS$, and let $Y_1, \ldots, Y_{r} \in \Lie(\G)$ be a basis of $\cS$. Our choice of homogeneous coordinates gives
\[ 
S = \exp(\cS) = \span \cur{v_1, \ldots, v_r} \subseteq \G,
\]
where $v_i = Y_i(0)$. Furthermore, due to \eqref{eq:submodule-commutative}, for every $s_1$, $s_2 \in \exp(\cS)$ we have
\[
 s_1 \cdot s_2 = s_1 + s_2,
\]
where $\cdot$ stands for the group operation, and the symbol $+$ denotes the usual sum of vectors. In particular, $S$ is a linear subspace of $\G$. As a useful result, we notice that
for each $j \in \cur{1, \ldots, r}$ there holds
\begin{equation}\label{eq:crucial-identity-commutative} Y_j(s) =d \ell_s(Y_j(0))  =  d \ell_s(v_j) =\frac{d}{dt}{\Big|_{t=0}} \bra{ s\cdot( t v_j) } = \frac{d}{dt}{\Big|_{t=0}} \bra{ s+ ( t v_j) } = v_j, \quad \text{ $ \forall s \in S$.}
\end{equation}
In other words, the restriction of $Y_j$ to $S$ is a constant vector field.


\begin{The}\label{thm:uniqueness-involutive}
Let $\cS$ satisfy \eqref{eq:submodule-commutative}, with basis $Y_1, \ldots, Y_{r} \in \Lie(\G)$, and let $b \in \Liph(H\Omega)$ be in the form
\begin{equation}\label{eq:b-involutive} b(x) = \sum_{i=1}^r a_i(x,t) Y_i(x), \quad \text{for $(t,x) \in [0,T]\times \Omega$,}\end{equation}
for $a_i \in \Liph(\Omega)$, for $i\in \cur{1,\ldots, r}$. Then, for every $x_0 \in \Omega$, there exists a unique solution to the Cauchy problem \eqref{eq:cauchy-problem} in $\Omega$.
\end{The}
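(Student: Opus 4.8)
The plan is to show that every solution of the Cauchy problem \eqref{eq:cauchy-problem} takes values in the linear subspace $S=\exp(\cS)$, on which the equation reduces to a classical ODE in a finite-dimensional Euclidean space driven by a Lipschitz vector field; uniqueness then follows from the Cauchy--Lipschitz theorem. As a preliminary reduction, I would apply Lemma~\ref{lem:change-initial-point-general} with $\bar x=x_0^{-1}$ — which holds verbatim with the left-invariant fields $Y_i$ in place of the $X_i$, since its proof only uses left invariance — so that $t\mapsto x_0^{-1}\gamma(t)$ solves \eqref{eq:ODE} with respect to the vector field $\sum_{i=1}^r a_i(t,x_0x)Y_i(x)$ on $x_0^{-1}\Omega$. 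Since $x\mapsto a_i(t,x_0x)$ is again Lipschitz with respect to $d$ by left invariance of $d$, this translated field is still of the form \eqref{eq:b-involutive}; and since $S$ is a linear subspace, $0\in S$. Hence we may assume $x_0=0\in S$.

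The core step — and the one I expect to be the main obstacle — is to prove that any solution $\gamma\colon[0,T]\to\Omega$ with $\gamma(0)=0$ remains in $S$. Let $\pi\colon\G\to S^{\perp}$ be the orthogonal projection onto the orthogonal complement of $S$, so that $|\pi x|=\dist(x,S)$. By the identity \eqref{eq:crucial-identity-commutative} we have $Y_i(s)=v_i\in S$ for every $s\in S$, so the polynomial map $x\mapsto\pi Y_i(x)$ vanishes identically on $S$; a first-order Taylor estimate on a fixed compact neighbourhood $K\subseteq\Omega$ of $\gamma([0,T])$ then provides a constant $C_K$ with $|\pi Y_i(x)|\le C_K\,\dist(x,S)$ for all $x\in K$ and $i=1,\dots,r$. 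Since $b$ is jointly continuous and $\gamma$ is continuous, $\gamma$ is $C^1$ and $\pi\dot\gamma(t)=\sum_{i=1}^r a_i(t,\gamma(t))\,\pi Y_i(\gamma(t))$; estimating $|a_i(t,\gamma(t))|$ by its maximum over $[0,T]$ and setting $u(t):=|\pi\gamma(t)|^2$, one obtains $\dot u\le c\,u$ with $u(0)=0$, whence $u\equiv 0$ by Gronwall's lemma, i.e.\ $\gamma(t)\in S$ for every $t$. It is precisely here that the commutativity \eqref{eq:submodule-commutative} enters, through \eqref{eq:crucial-identity-commutative}: without it the right-hand side of the ODE would fail to be tangent to $S$ and this Gronwall argument would collapse.

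It then remains to read the equation on $S$ as $\dot\gamma(t)=\sum_{i=1}^r a_i(t,\gamma(t))\,v_i$, a first-order system in the finite-dimensional space $S$. Since $S\subseteq H^1$, every $x\in S$ has $x_i=0$ for all $i$ of degree $d_i>1$, so for such $x$ the homogeneous gauge \eqref{eq:smooth-gauge} reduces to $(\sum_i x_i^{N})^{1/N}$ (as $\rho_i=N$ when $d_i=1$), which is an $\ell^{N}$-norm on $S$; combined with the equivalence of homogeneous gauges \eqref{eq:homogeneous-gauge-equivalence} and left invariance of $d$, this shows that $d$ restricted to $S\times S$ is bi-Lipschitz equivalent to the Euclidean distance on $S$. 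Consequently each $a_i(t,\cdot)$ restricted to $S\cap\Omega$ is Lipschitz in the Euclidean sense, uniformly in $t$, and jointly continuous, so the map $(t,y)\mapsto\sum_{i=1}^r a_i(t,y)\,v_i$ satisfies the hypotheses of the classical Cauchy--Lipschitz theorem on $S\cap\Omega$. Having shown that every solution of \eqref{eq:cauchy-problem} in $\Omega$ solves this Euclidean problem, we conclude uniqueness; existence of a solution defined on all of $[0,T]$ with values in $\Omega$ follows as well, either from Peano's theorem applied to the continuous field $b$ together with the confinement just established, or directly from the Euclidean theory on $S\cap\Omega$.
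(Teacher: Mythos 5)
Your proposal is correct and follows essentially the same route as the paper: a reduction to $x_0=0$ via left translation, a Gronwall argument on the squared Euclidean distance to $S$ (using the identity \eqref{eq:crucial-identity-commutative} and local Lipschitz continuity of the polynomial fields $Y_i$, exactly as in Lemma~\ref{lem:key-uniqueness-involutive}), and then classical Cauchy--Lipschitz on $S$, where the homogeneous distance is bi-Lipschitz equivalent to the Euclidean one. The only cosmetic difference is that you estimate $|\pi Y_i(x)|\le C\,\mathrm{dist}(x,S)$ directly, whereas the paper writes the same bound as $Y_i(\gamma(t))-Y_i(\Sigma(\gamma(t)))$.
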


The key of the proof relies on the following lemma.

\begin{Lem}\label{lem:key-uniqueness-involutive}
Let $\cS$ satisfy \eqref{eq:submodule-commutative}, with basis $Y_1, \ldots, Y_{r} \in \Lie(\G)$ and let $b \in C(H\Omega)$, be of the form \eqref{eq:b-involutive}, for $a_i \in C(\Omega)$. Then every $x_0 \in \Omega$, any solution to the problem \eqref{eq:cauchy-problem} in $\Omega$ belongs to $(x_0S) \cap \Omega$.
\end{Lem}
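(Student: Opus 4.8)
The plan is to show that the curve $\gamma$ stays inside the affine subspace $x_0 S$ by exhibiting, along $\gamma$, suitable first integrals of the motion. Concretely, since $S = \span\{v_1,\ldots,v_r\}$ is a linear subspace of $\G$ of dimension $r$, we can pick linearly independent linear functionals $\lambda_{r+1},\ldots,\lambda_\q$ on $\G$ (in graded coordinates) whose common zero set is exactly $S$; then $x_0 S$ is the common zero set of the affine functions $x \mapsto \lambda_k(x_0^{-1}x)$, $k = r+1,\ldots,\q$. Wait — a cleaner route, and the one I would actually carry out, is to use the left-translation Lemma~\ref{lem:change-initial-point-general}: replacing $b$ by $\bar b(t,x) = \sum_i a_i(t,\bar x^{-1}x)Y_i(x)$ with $\bar x = x_0^{-1}$, and using that each $Y_i$ is left-invariant so that $Y_i(\bar x^{-1} \cdot) $ pushes forward correctly, we reduce to the case $x_0 = 0$, where we must show $\gamma(t) \in S$ for all $t$.

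First I would set up the reduction to $x_0 = 0$ via Lemma~\ref{lem:change-initial-point-general}, observing that left-invariance of the $Y_i$ means the translated field is again of the form $\sum_i \tilde a_i(t,x) Y_i(x)$ with $\tilde a_i(t,x) = a_i(t,\bar x^{-1}x)$ continuous, and that $x_0 S$ is carried to $S$ since $\bar x^{-1}(x_0 S) = 0\cdot S = S$. Next, with $x_0 = 0$, I would choose linear coordinates adapted to the decomposition $\G = S \oplus S'$ for some complementary subspace $S'$; equivalently, pick linear functionals $\ell_k$, $k = r+1,\ldots,\q$, that vanish identically on $S$ and are linearly independent on $S'$. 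The key computation is then to show $\frac{d}{dt}\ell_k(\gamma(t)) = 0$ for each such $k$: indeed
\[
\frac{d}{dt}\ell_k(\gamma(t)) = \ell_k\!\bra{\dot\gamma(t)} = \ell_k\!\bra{\sum_{i=1}^r a_i(t,\gamma(t)) Y_i(\gamma(t))} = \sum_{i=1}^r a_i(t,\gamma(t))\, \ell_k\!\bra{Y_i(\gamma(t))},
\]
using linearity of $\ell_k$ on $T_{\gamma(t)}\G \cong \G$. Since $\gamma(t) \in S$ (this is exactly what we are proving, so one must run this as a continuity/connectedness argument, see below), identity \eqref{eq:crucial-identity-commutative} gives $Y_i(\gamma(t)) = v_i \in S$, whence $\ell_k(Y_i(\gamma(t))) = \ell_k(v_i) = 0$. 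Therefore $\ell_k(\gamma(t)) = \ell_k(\gamma(0)) = \ell_k(0) = 0$, forcing $\gamma(t) \in S$.

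The one subtlety — and the main obstacle — is that the computation above invokes \eqref{eq:crucial-identity-commutative}, which only holds for points of $S$, so the argument is slightly circular as stated. I would resolve this with a standard continuity argument: let $I = \{ t \in [0,T] : \gamma(s) \in S \text{ for all } s \in [0,t]\}$; it contains $0$, it is closed by continuity of $\gamma$, and on its interior the displayed computation is valid (there $\gamma(t) \in S$, so $Y_i(\gamma(t)) = v_i$ and $\ell_k(\dot\gamma(t)) = 0$), which shows $\ell_k \circ \gamma$ is constant — hence zero — on $[0,\sup I]$, so in fact $\gamma(\sup I) \in S$ and, by continuity and the fact that $\gamma$ solves the ODE past that time (and $S$ is relatively open in itself with the field staying tangent to $S$), $I$ is also open in $[0,T]$; connectedness gives $I = [0,T]$. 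An equivalent and perhaps slicker packaging: note that $t \mapsto \gamma(t)$ and $t \mapsto \text{(projection of }\gamma(t)\text{ onto }S\text{ along }S')$ both solve the same ODE on $S$ with the same initial datum $0$, but since continuity of $b$ alone does not give uniqueness I would instead argue directly that $\dist(\gamma(t), S)$ — measured in a fixed Euclidean norm on $\G$ — has vanishing right derivative wherever it is zero and is controlled by Grönwall-type reasoning off $S$; the cleanest is the open-closed argument above. Finally, one restricts everything to $\Omega$: since $\gamma$ takes values in $\Omega$ by hypothesis and in $S$ (after translation), it lies in $S \cap \bar x^{-1}\Omega$, i.e., back in original coordinates, in $(x_0 S)\cap \Omega$, which is the claim.
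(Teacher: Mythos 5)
Your main argument has a genuine gap at the ``openness'' step of the open--closed scheme. Knowing that the field is tangent to $S$ \emph{at points of $S$} (identity \eqref{eq:crucial-identity-commutative}) together with continuity of the coefficients does not prevent a given solution from leaving $S$ immediately after a time at which it touches $S$: since the $a_i$ are only continuous there is no uniqueness, and tangency along a closed set gives at best a Nagumo-type \emph{viability} statement (existence of \emph{some} solution staying in the set), not invariance for \emph{every} solution. A model for the failure is $\dot x =1$, $\dot y = 3\abs{y}^{2/3}$ in $\R^2$: the field is tangent to $S=\{y=0\}$ along $S$, yet the solution $x=t$, $y=t^3$ leaves $S$ instantly. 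So the phrase ``$S$ is relatively open in itself with the field staying tangent to $S$'' does not justify that your set $I$ is open, and the first-integral computation is only valid as long as $\gamma$ is already known to lie in $S$ --- the circularity you noticed is not removed by the connectedness argument. What is missing is a quantitative control of the \emph{transverse} component of $b$ near $S$, and this is exactly where the hypotheses enter: each $Y_i$ has polynomial coefficients, hence is locally Lipschitz in the Euclidean sense, so $\abs{Y_i(x)-Y_i(\Sigma(x))}\le C\,\dist(x,S)$ locally, where $\Sigma$ is the Euclidean orthogonal projection onto $S$.

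The route you mention only in passing and then set aside --- a Gr\"onwall estimate on the distance from $S$ --- is in fact the paper's proof, and it is the one that closes the gap. After the same reduction to $x_0=0$ via Lemma~\ref{lem:change-initial-point-general}, one sets $u(t)=\tfrac12\dist(\gamma(t),S)^2$ for an auxiliary scalar product, computes
\[
\frac{d}{dt}u(t)=\sum_{i=1}^r a_i(\gamma(t))\,\ang{Y_i(\gamma(t))-Y_i(\Sigma(\gamma(t))),\,( I-\Sigma)\gamma(t)},
\]
where \eqref{eq:crucial-identity-commutative} is applied at the point $\Sigma(\gamma(t))\in S$ (not at $\gamma(t)$, which avoids your circularity) and the term $\ang{v_i,(I-\Sigma)\gamma(t)}$ drops out because $v_i\in S$. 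The local Lipschitz bound on $Y_i$ and the boundedness of $a_i\circ\gamma$ on $[0,T]$ then give $\abs{\tfrac{d}{dt}u(t)}\le C\,u(t)$, and Gr\"onwall with $u(0)=0$ forces $u\equiv 0$. I recommend you replace the open--closed argument by this estimate (or, equivalently, redo your first-integral computation with $\ell_k(\gamma(t))$ replaced by the squared distance and the comparison point $\Sigma(\gamma(t))$), since as written the proposal does not prove the lemma.
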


\begin{proof}
Without any loss of generality, we may assume $x_0 = 0$, (it is sufficient to apply Lemma~\ref{lem:change-initial-point-general} with $\bar{x} = x_0^{-1}$) and that $t \mapsto \gamma(t)$ is defined on some interval $[0,T]$. Since $\gamma$ is continuous, we have $\gamma(t) \in K$, for some compact $K$, for every $t \in [0,T]$, and in particular the functions $t\mapsto a_i(\gamma(t))$ are uniformly bounded. 

We introduce an auxiliary scalar product $\ang{\cdot, \cdot}$ on $\G$ (seen as the vector space $\R^{\q}$), and let $\sigma$ be the induced distance function from $S$, namely
\[ 
\sigma(x) := \inf_{s \in S} \abs{x - s}, \quad \text{for $x \in \G$,}
\]
and we denote by $\Sigma (x)\in S$ the minimum point, i.e., we let  $x \mapsto \Sigma (x)$ be the (linear) orthogonal projection on $S$, which satisfies $\ang{s, x - \Sigma (x) } = 0$, for every $s \in S$, hence 
\[
\sigma(x) = \abs{ (I - \Sigma) x }.
\]
The function $t  \mapsto u(t) := \sigma(\gamma(t))^2/2$ is differentiable on $[0,T]$, with derivative
\begin{equation*}\begin{split} \frac{d}{dt} u(t) = \frac {d}{dt} \frac{ \sigma(\gamma(t))^2}{2}  & = \ang{ (I - \Sigma)\dot \gamma (t) , (I - \Sigma)\gamma(t) }\\
 & = \ang{ \dot{\gamma}\bra{t}, (I - \Sigma)\gamma(t)} \\
& =\sum_{i =1}^r a_i(\gamma(t)) \ang{ Y_i(\gamma(t)), (I-\Sigma)\gamma(t)}\\
  & =\sum_{i=1}^r a_i(\gamma(t)) \ang{ Y_i(\gamma(t)) - Y_i(\Sigma(\gamma(t))) + Y_i(\Sigma(\gamma(t))), (I-\Sigma)\gamma(t)} \\
\text{(by \eqref{eq:crucial-identity-commutative} with $s= \Sigma(\gamma(t))$)} & =\sum_{i=1}^r a_i(\gamma(t)) \ang{ Y_i(\gamma(t)) -  Y_i(\Sigma(\gamma(t))) + v_i, (I-\Sigma)\gamma(t)}\\
&  =\sum_{i=1}^r a_i(\gamma(t)) \ang{ Y_i(\gamma(t)) -  Y_i(\Sigma(\gamma(t))), (I-\Sigma)\gamma(t)},
\end{split}\end{equation*} 
where, in the last identity, we used the fact that $v_i \in S$, hence $\ang{ v_i, (I-\Sigma) \gamma(t)}=0$.
 
Since each $Y_i$ is locally Lipschitz (with respect to the Euclidean structure) and $a_i(\gamma(t))$ is uniformly bounded, we obtain the inequality
\[ \abs{ \frac{d}{dt} u(t)} \le \sum_{i=1}^r c \abs{ (I-\Sigma)\gamma(t)}^2 \le  2rc u(t), \quad \text{for $t \in (0,T)$,}\]
hence, by Gronwall inequality,  $u(t) \le e^{2rc t} u(0)$, and since $u(\gamma(0))=0$, we have $u(\gamma(t)) = 0$ as well, for every $t \in [0,T]$.
\end{proof}

\begin{proof}[Proof of Theorem~\ref{thm:uniqueness-involutive}]
We may assume that $x_0 = 0$. By the lemma just proved, it is also sufficient to show uniqueness among the solutions that belong to $S\cap \Omega$. It is then immediate that the homogeneous norm \eqref{eq:smooth-gauge}, when restricted to $S \times S$, coincides with (a norm equivalent to) the Euclidean one, and that the group operation in $S$ is the usual sum of vectors: from this we conclude that uniqueness must hold by standard Cauchy-Lipschitz theory of ODE's in Euclidean spaces.
\end{proof}

\section{A counterexample to uniqueness}\label{sec:counterexample}

In this section we provide an example of a Lipschitz horizontal vector field for which the Cauchy problem \eqref{eq:cauchy-problem}  lacks of uniqueness, for suitably prescribed $x_0$.

On the Heisenberg group $\H = \R^3$, with the notation introduced in Section~\ref{sec:intro}, in particular \eqref{eq:heisenberg-operation}, \eqref{eq:heisenberg-norm} and \eqref{eq:heisenberg-fields}, we study a time-dependent, Lipschitz horizontal vector field. In Section~\ref{sec:autonomous-counterexample}, we also give a variant of it which is autonomous. We let 
\begin{equation}\label{eq:b-counterexample}b(t,x) := X_1(x) + a(t,x) X_2(x), \quad \text{for $(t,x) \in [0,T] \times \H$,} \end{equation}
where
\[ a(t,x) := \norm{ (t,0,0)^{-1} x } = \bra{ \bra{|x_1-t|^2 + |x_2|^2}^2 + |x_3 - t x_2|^2 }^{1/4}.\]
The triangle inequality of $\norm{\cdot}$ with respect to the group operation implies that $a(t,\cdot)$ is uniformly 1-Lipschitz continuous. Clearly, the curve $t \mapsto (t,0,0)$ is a solution to the associated differential equation  with $\gamma(0) = 0$. We aim to prove that this solution is not unique.
To see this, we begin by writing explicitly the system of differential equations as a special case of \eqref{eq:ode-system-componentwise}:
\begin{equation}\label{eq:counterexample-componentwise}
\left\{\begin{array}{ll}
\dot \gamma_1(t)=1 \\
\dot \gamma_2(t)=a(t,\gamma(t)) \\
\dot \gamma_3(t)=- \gamma_2(t) + a(t,\gamma(t))\gamma_1(t), \\
\end{array}\right.
\end{equation}
from which we see immediately that any solution must satisfy $\gamma_1(t) = t$. Thus, we may reduce ourselves to the study of the following $2$-dimensional problem
\begin{equation*}
\left\{\begin{array}{ll}
\dot \gamma_2(t)=a(t,(t, \gamma_2(t), \gamma_3(t)) \\
\dot \gamma_3(t)=- \gamma_2(t) + a\lls t,(t, \gamma_2(t), \gamma_3(t))\rrs t, \\
\end{array}\right.
\end{equation*}
with $\gamma_2(0) = \gamma_3(0)=0$, and provide existence of a non-trivial solution, i.e., different from $\gamma_2(t) = \gamma_3(t) = 0$, for $t \in [0,T]$. To this aim, we perform a suitable ``change of coordinates'' and introduce new variables
\[ u(t):= 18\gamma_2(t)t^{-3}, \quad v(t):= 36\bra{\gamma_2(t)t^{-3} - \gamma_3(t)t^{-4}},\]
along which the original system has a simpler analytic expression. Precisely, lack of uniqueness follows from a combination of the next two lemmas.

\begin{Lem}\label{lem:change-coordinates-counterexample}
Let $(u,v)\in C([0,T]; \R^2)\cap C^1((0,T);\R^2)$ be an integral solution to the following (singular) system
\begin{equation}\label{eq:counterexample-good-coordinates}
\left\{\begin{array}{ll}
 t \dot u  =  - 3 u   + 3 \bra{ (t/3)^{4} u^4 +  v^2 }^{1/4}\\
 t \dot v  =-  4 v   + 4 u 
  \end{array}\right.
\end{equation}
such that $u(0) = v(0) = 1$.
Then the curve
\[ \gamma(t)= (\gamma_1(t), \gamma_2(t), \gamma_3(t))  := \bra{t, \frac{t^3}{18} u(t), \frac{t^4}{36} (2u(t) - v(t))}, \quad \text{$t \in [0,T]$,}\]
solves the ODE \eqref{eq:counterexample-componentwise}, with $\gamma(0) = 0$.
\end{Lem}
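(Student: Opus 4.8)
The plan is to verify directly that the proposed curve $\gamma$ solves \eqref{eq:counterexample-componentwise}, by substituting the expressions $\gamma_2(t) = t^3 u(t)/18$ and $\gamma_3(t) = t^4(2u(t)-v(t))/36$ into the system and checking each of the three equations. The first equation $\dot\gamma_1 = 1$ is immediate since $\gamma_1(t) = t$. For the remaining two, the key preliminary computation is to identify $a(t,\gamma(t))$ in terms of $u$ and $v$: plugging $\gamma(t) = (t, t^3u/18, t^4(2u-v)/36)$ into
\[
a(t,x) = \bra{ \bra{|x_1-t|^2 + |x_2|^2}^2 + |x_3 - tx_2|^2 }^{1/4},
\]
the first slot $x_1 - t$ vanishes, so $a(t,\gamma(t)) = \bra{ |\gamma_2(t)|^4 + |\gamma_3(t) - t\gamma_2(t)|^2 }^{1/4}$. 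Then $\gamma_3(t) - t\gamma_2(t) = \tfrac{t^4}{36}(2u-v) - \tfrac{t^4}{18}u = -\tfrac{t^4}{36}v$, and $|\gamma_2(t)|^4 = (t^3u/18)^4 = t^{12}u^4/18^4$, so after factoring out $t^{12}$ one finds $a(t,\gamma(t)) = \tfrac{t^3}{18}\bra{ (t/3)^4 u^4 + v^2 }^{1/4}$ — matching exactly the nonlinear term appearing on the right side of the first line of \eqref{eq:counterexample-good-coordinates}.

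Next I would check the $\gamma_2$-equation. Differentiating $\gamma_2 = t^3u/18$ gives $\dot\gamma_2 = \tfrac{1}{18}(3t^2 u + t^3\dot u) = \tfrac{t^2}{18}(3u + t\dot u)$. Using the first equation of \eqref{eq:counterexample-good-coordinates}, $t\dot u = -3u + 3\bra{(t/3)^4u^4 + v^2}^{1/4}$, so $3u + t\dot u = 3\bra{(t/3)^4u^4+v^2}^{1/4}$, whence $\dot\gamma_2 = \tfrac{t^2}{6}\bra{(t/3)^4u^4+v^2}^{1/4} = \tfrac{t^3}{18}\bra{(t/3)^4u^4+v^2}^{1/4} \cdot \tfrac{3}{t}$; comparing with the expression for $a(t,\gamma(t))$ found above, this equals $a(t,\gamma(t))$, as required. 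For the $\gamma_3$-equation, differentiate $\gamma_3 = \tfrac{t^4}{36}(2u-v)$ to get $\dot\gamma_3 = \tfrac{1}{36}\bra{4t^3(2u-v) + t^4(2\dot u - \dot v)}$, substitute $t\dot u$ and $t\dot v = -4v+4u$ from \eqref{eq:counterexample-good-coordinates}, and simplify; one should arrive at $\dot\gamma_3 = -\gamma_2(t) + a(t,\gamma(t))\,t$, which is precisely the third line of \eqref{eq:counterexample-componentwise} since $\gamma_1(t)=t$.

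Finally I would address the initial condition and regularity at $t=0$. Since $u,v$ are continuous on $[0,T]$ with $u(0)=v(0)=1$, and $\gamma_2,\gamma_3$ carry factors $t^3$ and $t^4$ respectively, we get $\gamma(0) = (0,0,0)$; moreover $\gamma$ is $C^1$ on $[0,T]$ — on $(0,T)$ this is clear from the computation above, and near $t=0$ the factors $t^3$, $t^4$ ensure $\dot\gamma_2(t), \dot\gamma_3(t) \to 0$, consistent with the right-hand sides vanishing there. It remains to note that $\gamma$ is an \emph{integral} solution, i.e.\ $\gamma(t) = \gamma(0) + \int_0^t b(s,\gamma(s))\,ds$, which follows from the $C^1$ identity by integration (and using that $b(t,\gamma(t))$ extends continuously to $t=0$). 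The main obstacle I anticipate is purely bookkeeping: keeping the powers of $t$ and the numerical constants $18$, $36$, $3^4$ straight through the $\gamma_3$-equation, where the cancellation of the $-4v+4u$ terms against the $4t^3(2u-v)$ term must be arranged so that what survives is exactly $-\gamma_2 + ta(t,\gamma(t))$; the change of variables was engineered precisely so that this works, so no genuine analytic difficulty arises beyond careful algebra.
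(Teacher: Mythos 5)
Your approach is exactly the paper's: verify \eqref{eq:counterexample-componentwise} by direct differentiation and substitution of \eqref{eq:counterexample-good-coordinates}. However, as written your verification of the second equation contains a genuine inconsistency coming from a wrong coefficient. Since $\gamma_1(t)=t$, one has $a(t,\gamma(t))=\bra{\gamma_2(t)^4+(t\gamma_2(t)-\gamma_3(t))^2}^{1/4}$ with $t\gamma_2-\gamma_3=\tfrac{t^4}{36}v$, and hence
\[
a(t,\gamma(t))=\bra{\tfrac{t^{12}}{18^4}u^4+\tfrac{t^{8}}{36^2}v^2}^{1/4}
=\frac{t^2}{6}\bra{(t/3)^4u^4+v^2}^{1/4},
\]
not $\tfrac{t^3}{18}\bra{(t/3)^4u^4+v^2}^{1/4}$ as you state (the two differ by the factor $t/3$). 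With your stated formula, your own chain of identities gives $\dot\gamma_2=\tfrac{t^3}{18}(\cdots)^{1/4}\cdot\tfrac3t$, which is $\tfrac{3}{t}\,a(t,\gamma(t))$ by your formula, so the claimed equality $\dot\gamma_2=a(t,\gamma(t))$ does not follow as written. Once the prefactor is corrected to $t^2/6$, your computation $\dot\gamma_2=\tfrac{t^2}{6}\bra{(t/3)^4u^4+v^2}^{1/4}$ matches $a(t,\gamma(t))$ exactly, which is precisely the paper's computation.

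For the third equation you only describe the substitution without carrying it out; it does close, and you should include it: $\dot\gamma_3=\tfrac{t^3}{36}\bra{8u-4v+2\,t\dot u-t\dot v}=\tfrac{t^3}{36}\bra{-2u+6\bra{(t/3)^4u^4+v^2}^{1/4}}=-\gamma_2(t)+t\,a(t,\gamma(t))$, where the $\pm 4v$ terms cancel (the paper reaches the same identity by writing $\gamma_3=t\gamma_2-\tfrac{t^4}{36}v$ first). Your remarks on $\gamma(0)=0$ and on passing from the pointwise identity on $(0,T)$ to the integral formulation are fine and slightly more careful than the paper, which only computes for $t>0$.
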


\begin{The}\label{lem:existence-singular-ode}
There exists a solution $(u,v) \in C([0,T]; \R^2)$ to \eqref{eq:counterexample-good-coordinates} with $u(0) = v(0) = 1$.
\end{The}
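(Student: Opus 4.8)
The plan is to prove existence for the singular Cauchy problem \eqref{eq:counterexample-good-coordinates} by regularizing the singularity at $t=0$ and passing to the limit. The system has the structure $t\dot u = -3u + 3g(t,u,v)$, $t\dot v = -4v + 4u$, where $g(t,u,v) = ((t/3)^4 u^4 + v^2)^{1/4}$ is continuous and, crucially, globally Lipschitz in $(u,v)$ on bounded sets (being a composition of the homogeneous norm, which is Lipschitz, with affine maps). The difficulty is that dividing by $t$ makes the right-hand side blow up as $t \to 0^+$, so a naive application of Peano's theorem on $[0,T]$ fails; the initial condition $u(0)=v(0)=1$ must be realized as a limit.

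First I would recast the problem in integral form compatible with the singularity. Dividing the first equation by $t$ gives $\dot u + 3u/t = 3g(t,u,v)/t$; multiplying by the integrating factor $t^3$ yields $\frac{d}{dt}(t^3 u) = 3t^2 g(t,u,v)$, and similarly $\frac{d}{dt}(t^4 v) = 4t^3 u$. Integrating from $0$ to $t$ and using that $t^3 u(t) \to 0$, $t^4 v(t) \to 0$ (which is automatic if $u,v$ stay bounded near $0$), one obtains the fixed-point formulation
\[
u(t) = \frac{3}{t^3}\int_0^t 3 s^2 g(s,u(s),v(s))\,ds, \qquad v(t) = \frac{4}{t^4}\int_0^t 4 s^3 u(s)\,ds,
\]
valid for $t > 0$, together with the boundary requirement $u(0) = v(0) = 1$. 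The second plan is to solve this on a small interval $[0,\tau]$ by a contraction argument in the Banach space $C([0,\tau];\R^2)$ with the sup norm, working in a closed ball around the constant function $(1,1)$. The weights $3s^2/t^3$ and $4s^3/t^4$ are probability-like densities on $(0,t)$ (they integrate to $1$), so the nonlinear operator maps the ball into itself for $\tau$ small — because $g(0,1,1) = (0 + 1)^{1/4} = 1$, so the averaged integrand is close to $1$ — and the Lipschitz bound on $g$ makes it a contraction for $\tau$ small, with the averaging actually improving the Lipschitz constant. This yields a local solution; continuation to $[0,T]$ for any prescribed $T$ then follows from the standard Peano/continuation theorem away from $t=0$, since for $t \ge \tau > 0$ the system is regular with continuous right-hand side, and a priori bounds (from Gronwall applied to $t^3 u$ and $t^4 v$, or simply from the linear growth of $g$) prevent blow-up in finite time.

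The main obstacle I anticipate is verifying the self-map and contraction properties at the endpoint $t=0$ with the correct uniformity: one must check that the map $t \mapsto \frac{3}{t^3}\int_0^t 3s^2 g(s,u(s),v(s))\,ds$ is genuinely continuous up to and including $t=0$ with value $1$, which uses continuity of $g$ and the normalization $\int_0^t 3s^2\,ds = t^3$, and that the Lipschitz estimate
\[
\Big|\frac{3}{t^3}\int_0^t 3s^2\big(g(s,u_1,v_1) - g(s,u_2,v_2)\big)\,ds\Big| \le 3\,\mathrm{Lip}(g)\,\|(u_1,v_1) - (u_2,v_2)\|_\infty
\]
(and the analogous one for the $v$-component, where the factor is $4$) combine to a contraction after taking $\tau$ small enough that the coupling constants multiply to less than $1$ — here one may need to iterate the operator twice, or use that the $v$-equation contributes an extra factor of $\tau$, to close the argument. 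An alternative, perhaps cleaner, route that I would keep in reserve is to regularize by replacing the singular system with $(t+\veps)\dot u_\veps = -3u_\veps + 3g$, etc., solve each by ordinary Peano on $[0,T]$ with $u_\veps(0) = v_\veps(0) = 1$, establish $\veps$-uniform bounds and equicontinuity via the integrating-factor identities, and extract a uniformly convergent subsequence by Arzelà–Ascoli whose limit solves \eqref{eq:counterexample-good-coordinates}; this avoids the delicate endpoint contraction at the cost of a compactness argument. Either way, only existence is claimed in Theorem~\ref{lem:existence-singular-ode}, so uniqueness of $(u,v)$ need not — and in view of the counterexample, should not — be addressed.
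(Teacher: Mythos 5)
Your primary route (recasting the singular problem as a fixed point for the averaged operators $u\mapsto\frac{1}{t^{3}}\int_0^t 3s^{2}g(s,u,v)\,ds$, $v\mapsto\frac{1}{t^{4}}\int_0^t 4s^{3}u\,ds$, with $g(t,u,v)=((t/3)^{4}u^{4}+v^{2})^{1/4}$) is genuinely different from the paper's, but the mechanism you invoke to close it is wrong on two counts. First, $g$ is \emph{not} Lipschitz in $(u,v)$ on bounded sets: at $t=0$ it equals $|v|^{1/2}$, and this failure of Euclidean Lipschitzness at $v=0$ is exactly the source of the paper's non-uniqueness; what is true (and what you actually need) is that $g$ is Lipschitz on a ball around $(1,1)$ on which $v$ stays away from $0$. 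Second, and more seriously, because the weights $3s^{2}/t^{3}$, $4s^{3}/t^{4}$ are normalized probability densities, neither component gains a factor of $\tau$: the $v$-operator has Lipschitz constant exactly $1$ in $u$ for \emph{every} $\tau$ (test it on constant functions), and the $u$-operator's constant in $v$ tends to $\sup\partial_v g$, not to $0$, as $\tau\downarrow 0$. So ``taking $\tau$ small enough that the coupling constants multiply to less than $1$'' and ``the $v$-equation contributes an extra factor of $\tau$'' both fail (and with the doubled prefactors $\tfrac{3}{t^3}$, $\tfrac{4}{t^4}$ in your displayed formulas, i.e.\ constants $3\operatorname{Lip}(g)$ and $4$, not even the iterated map would contract). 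The argument can be repaired, but only by checking the specific quantitative fact you did not check: on a ball of radius $1/2$ around $(1,1)$ one has $\partial_v g\le\frac12 v^{-1/2}\le 1/\sqrt2<1$ and $\partial_u g=O(\tau^{4})$, so the Lipschitz ``matrix'' is roughly $\bigl(\begin{smallmatrix} O(\tau^{4}) & 1/\sqrt2 \\ 1 & 0\end{smallmatrix}\bigr)$, whose square (or a weighted norm) gives a contraction; the self-map property and the forced endpoint identities $u(0)=\sqrt{v(0)}$, $v(0)=u(0)$ then pin the initial value to $(1,1)$ inside the ball.

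Your fallback route is precisely the paper's: regularize to the system \eqref{eq:system-nice-form} with $t+\varepsilon$, get $\varepsilon$-uniform estimates, and pass to the limit via Ascoli--Arzel\`a in integral form. But ``establish $\varepsilon$-uniform bounds and equicontinuity via the integrating-factor identities'' is exactly where all the work lies and is left unproved in your sketch: from the equation one only gets $|\dot u^{\varepsilon}|\le 3|u^{\varepsilon}-g|/(t+\varepsilon)$, which is $O(1/(t+\varepsilon))$ and hence useless for equicontinuity near $t=0$ unless one first proves the two-sided bounds $|u^{\varepsilon}-1|,\,|v^{\varepsilon}-1|\le C(t+\varepsilon)$ on an interval of length independent of $\varepsilon$; producing this cancellation (via the comparison Lemmas~\ref{lem:comparison} and~\ref{lem:comparisonneg}, the upper barrier $v\le 1+c(t+\varepsilon)$ and the lower barrier $v\ge 1-12c(t+\varepsilon)$, leading to \eqref{eq:compactness}) occupies most of the paper's proof of Theorem~\ref{lem:existence-singular-ode}. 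One can indeed run this step through your $(t+\varepsilon)$-integrating-factor identities, via the bootstrap $\sup_{s\le t}|u-1|\le \operatorname{Lip}_v(g)\,\sup_{s\le t}|v-1|+O(t^{4})$ and $\sup_{s\le t}|v-1|\le\sup_{s\le t}|u-1|$, but this again closes only because $\operatorname{Lip}_v(g)<1$ near $(1,1)$ --- the same unexamined constant as above. As written, both branches of your proposal therefore have a genuine gap at the decisive quantitative step.
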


\begin{proof}[Proof of Lemma~\ref{lem:change-coordinates-counterexample}]
For any $t >0$, we have
\begin{equation*}\begin{split} 
\dot \gamma_2 (t) = \frac{d}{dt}\pa{ \frac{t^3}{18} u(t)} & = \frac{t^2}{6} u(t) + \frac{t^2}{18}\bra{  - 3 u   + 3 \bra{ (t/3)^{4} u^4 +  v^2 }^{1/4}} \\
 & =  \frac{t^2}{6}\bra{ (t/3)^{4} u^4 +  v^2 }^{1/4}=  \bra{ \bra{\frac{t^3}{18} u}^{4} + \bra{ \frac{t^4}{36} v}^2 }^{1/4}\\
 &  = \bra{ \gamma_2^4(t) + \bra{ t\gamma_2 (t) - \gamma_3(t) }^2 }^{1/4} = a(t, \gamma(t) )
\end{split}\end{equation*}
and similarly
\begin{equation*}\begin{split} 
\dot \gamma_3 (t) = \frac{d}{dt}\pa{ \frac{t^4}{36} (2u(t) - v(t)) } & = \frac{d}{dt}\bra{  t \gamma_2(t) - \frac{t^4}{36} v(t) } \\
& = \gamma_2(t) +t \dot \gamma_2 (t)  - \frac{t^3}{9} v(t) + \frac{t^3}{9} v(t) -\frac{t^3}{9} u(t)\\
& =   \gamma_2(t) + t a(t, \gamma(t))  - 2 \gamma_2(t) = -\gamma_2(t) + t a(t, \gamma(t)).\qedhere
\end{split}\end{equation*}
\end{proof}
Before we proceed with the proof of Theorem~\ref{lem:existence-singular-ode},  we need
two comparison results.

\begin{Lem}\label{lem:comparison}
Let $\veps,\tau>0$ and let $z\in C([0,\tau]; \R)$ be everywhere differentiable in $(0,\tau)$ such that for $c_1$, $c_2$ $c_3$, $c_4 \in [0,\infty)$, $c_2>0$, we have
\[ (t+\veps) \dot z(t)\le c_1 - c_2 z (t)+ (t+\veps) ( c_3 z + c_4)\quad \text{for $t \in (0, \tau)$}\]
and $z(0) = c_1/c_2$. Then, one has
\[ z(t) \le \bra{\frac { c_1} {c_2} + c_4 (t+\veps)} e^{c_3(t+\veps)}\quad \text{for $t \in [0,\tau]$.}\]
\end{Lem}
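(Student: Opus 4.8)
The plan is to treat this as a Gronwall-type comparison argument with the shift by $\veps$ playing the role of pushing the singular point $t=0$ to $t=-\veps<0$, so that the coefficient $t+\veps$ never vanishes on $[0,\tau]$ and the integrating-factor method applies without issue. First I would introduce the integrating factor associated to the linear part: rewrite the differential inequality as
\[
(t+\veps)\dot z(t) + c_2 z(t) \le c_1 + (t+\veps)(c_3 z(t) + c_4),
\]
and observe that multiplying by $(t+\veps)^{c_2-1}$ turns the left-hand side into $\frac{d}{dt}\bigl((t+\veps)^{c_2} z(t)\bigr)$. I would then set $w(t) := (t+\veps)^{c_2} z(t)$ and deduce
\[
\dot w(t) \le c_1 (t+\veps)^{c_2-1} + c_3 w(t) + c_4 (t+\veps)^{c_2},
\]
valid for $t\in(0,\tau)$. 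This is now a genuinely non-singular differential inequality for $w$, amenable to the standard Gronwall lemma in the form "$\dot w \le \phi(t) + c_3 w$ with $w$ continuous implies $w(t) \le e^{c_3 t} w(0) + \int_0^t e^{c_3(t-s)}\phi(s)\,ds$".

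The second step is to estimate the resulting integral and translate back. With $\phi(s) = c_1 (s+\veps)^{c_2-1} + c_4 (s+\veps)^{c_2}$ and $w(0) = \veps^{c_2} z(0) = \veps^{c_2} c_1/c_2$, I would bound $e^{c_3(t-s)} \le e^{c_3(t+\veps)}$ for $s\in[0,t]$ (using $c_3\ge 0$), pull that factor out, and compute $\int_0^t (s+\veps)^{c_2-1}\,ds = \frac{1}{c_2}\bigl((t+\veps)^{c_2}-\veps^{c_2}\bigr)$ and $\int_0^t (s+\veps)^{c_2}\,ds \le (t+\veps)\cdot \frac{1}{c_2}(t+\veps)^{c_2}$ (crudely bounding the integrand by its endpoint value, or more simply $\le (t+\veps)^{c_2+1}$). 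Assembling these: the $c_1/c_2$ terms combine as $\frac{c_1}{c_2}\veps^{c_2} + \frac{c_1}{c_2}\bigl((t+\veps)^{c_2}-\veps^{c_2}\bigr) = \frac{c_1}{c_2}(t+\veps)^{c_2}$, and the $c_4$ term contributes at most $c_4 (t+\veps)^{c_2+1}$, all multiplied by $e^{c_3(t+\veps)}$. Dividing through by $(t+\veps)^{c_2}$ recovers exactly
\[
z(t) \le \Bigl(\frac{c_1}{c_2} + c_4 (t+\veps)\Bigr) e^{c_3(t+\veps)}, \quad t\in[0,\tau].
\]

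The main thing to be careful about, rather than a deep obstacle, is the regularity bookkeeping needed to justify the Gronwall step: $z$ is only assumed continuous on $[0,\tau]$ and differentiable on the open interval $(0,\tau)$, so $w$ is likewise continuous on $[0,\tau]$ and differentiable on $(0,\tau)$, and one must invoke a version of Gronwall's lemma that tolerates this (for instance, by noting $w$ is the difference of a $C^1$ function on $(0,\tau)$ and a monotone "error", or by applying the inequality on $[\eta,\tau]$ and letting $\eta\downarrow 0$ using continuity at $0$). A secondary point is to make sure the bound $e^{c_3(t-s)}\le e^{c_3(t+\veps)}$ and the crude estimate of the $c_4$-integral are chosen so that the final constants come out exactly as stated and not merely up to a harmless factor; the computation above shows the clean choices that make it work. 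I would present the argument in the shifted-variable form from the start to keep the exposition short, remarking only briefly that $\veps>0$ is precisely what makes the integrating factor $(t+\veps)^{c_2-1}$ integrable near $0$ regardless of the sign of $c_2-1$.
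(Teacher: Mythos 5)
Your argument is correct, but it follows a genuinely different route from the paper. You absorb the singular damping term by the integrating factor $(t+\veps)^{c_2}$, setting $w(t)=(t+\veps)^{c_2}z(t)$, and then run a standard (non\mbox{-}singular) Gronwall estimate with inhomogeneity $\phi(s)=c_1(s+\veps)^{c_2-1}+c_4(s+\veps)^{c_2}$; the exact constants come out of the cancellation $\frac{c_1}{c_2}\veps^{c_2}+\frac{c_1}{c_2}\bigl((t+\veps)^{c_2}-\veps^{c_2}\bigr)=\frac{c_1}{c_2}(t+\veps)^{c_2}$ together with the crude bounds $e^{c_3(t-s)}\le e^{c_3(t+\veps)}$ and $\int_0^t(s+\veps)^{c_2}\,ds\le (t+\veps)^{c_2+1}$ (note that your intermediate claim with the factor $1/c_2$ is justified by exact integration, not by bounding the integrand at the endpoint, but only the simpler bound is needed anyway). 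The paper instead sets $w(t)=z(t)e^{-c_3(t+\veps)}-c_4(t+\veps)$, which removes the $c_3$ and $c_4$ terms at once and reduces the problem to $(t+\veps)\dot w\le c_1-c_2w$ with $w(0)\le c_1/c_2$; the conclusion $w\le c_1/c_2$ then follows by comparison with the constant solution of $\dot y=\frac{c_1}{t+\veps}-\frac{c_2}{t+\veps}y$, which is non\mbox{-}singular precisely because $\veps>0$. Your version makes the role of $\veps$ and of each sign hypothesis ($c_1,c_4\ge 0$ for positivity of $\phi$ and of $w(0)$, $c_3\ge 0$ for the exponential bounds) completely explicit and is self\mbox{-}contained modulo a Gronwall lemma tolerating differentiability only on the open interval (your fix via monotonicity or via $[\eta,\tau]$ and $\eta\downarrow 0$ is fine); the paper's version is shorter and avoids any integral computation by comparing with a constant solution. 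Both use exactly the stated hypotheses, so this is a matter of presentation rather than of strength.
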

\begin{proof}
Define $w(t) := z(t)e^{-c_3 (t+\veps) } - c_4 (t+\veps)$, which satisfies the 
following differential inequality, for $t \in (0,\tau)$,
\begin{equation*}
\begin{split}
(t+\veps)\dot w(t) & = (t+\veps)\dot z(t) e^{-c_3 (t+\veps) } - (t+\veps) c_3 e^{-c_3 (t+\veps) }z(t) - c_4(t+\veps) \\
&\le \bra{c_1 - c_2 z(t)} e^{-c_3 (t+\veps) } + (t+\veps) ( c_3 z + c_4)e^{-c_3 (t+\veps) } - (t+\veps) c_3 z e^{-c_3 (t+\veps)}  - c_4(t+\veps)\\
&= \bra{c_1 - c_2 z(t)} e^{-c_3 (t+\veps) } + (t+\veps) c_4 \bra{ e^{-c_3 (t+\veps)}-1} \\
& \le  c_1 - c_2 \bra{ w(t)+ c_4(t+\veps)} + (t+\veps) c_4 \bra{ e^{-c_3 (t+\veps)}-1} \\
& \le  c_1 - c_2 w(t),
\end{split}
\end{equation*}
and $w(0) = e^{-c_3 \veps}  c_1 / c_2 - c_4\ep \le  c_1/ c_2$. 
Observing that the solution of the Cauchy problem
\begin{equation*}
\left\{\begin{array}{ll}
\ds\dot y=\frac{c_1}{t+\ep}-\frac{c_2}{t+\ep} y \\
y(0)=c_1/c_2 \\
\end{array}\right.
\end{equation*}
on $[0,\tau]$ is the constant function $y\equiv c_1/c_2$, by standard comparison for ODE's we get $w(t) \le  c_1/ c_2$ for $t \in [0,\tau]$ and our claim follows.
\end{proof}

\begin{Lem}\label{lem:comparisonneg}
Let $\veps,\tau>0$ and let $w\in C([0,\tau]; \R)$ be everywhere differentiable in $(0,\tau)$ such that for $c_2$ $c_4 \in [0,\infty)$, $c_2>0$ and $c_1\le0$, we have
\beq\label{eq:diffc30}
 (t+\veps) \dot w(t)\le c_1 - c_2 w (t)+ c_4 (t+\veps) \quad \text{for $t \in (0, \tau)$}
\eeq
and $w(0) = c_1/c_2$. Then, one has
\[
 w(t) \le \frac { c_1} {c_2} + c_4 (t+\veps)  \quad \text{for $t \in [0,\tau]$.}
 \]
\end{Lem}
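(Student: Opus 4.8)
The plan is to prove Lemma~\ref{lem:comparisonneg} by the same substitution-and-comparison strategy used in Lemma~\ref{lem:comparison}, but now exploiting the sign hypotheses $c_1 \le 0$ and the absence of a $c_3$-term (i.e.\ the case $c_3 = 0$), which should make the auxiliary function even simpler. First I would set $w_\ast(t) := w(t) - c_4(t+\veps)$, so that the nonhomogeneous linear term $c_4(t+\veps)$ on the right of \eqref{eq:diffc30} gets absorbed: compute $(t+\veps)\dot w_\ast(t) = (t+\veps)\dot w(t) - c_4(t+\veps) \le c_1 - c_2 w(t) + c_4(t+\veps) - c_4(t+\veps) = c_1 - c_2 w(t) = c_1 - c_2 w_\ast(t) - c_2 c_4 (t+\veps) \le c_1 - c_2 w_\ast(t)$, where the last inequality uses $c_2, c_4, (t+\veps) \ge 0$. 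Also $w_\ast(0) = w(0) - c_4\veps = c_1/c_2 - c_4\veps \le c_1/c_2$.

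Next I would compare $w_\ast$ against the constant $c_1/c_2$. The relevant linear Cauchy problem is $\dot y = \frac{c_1}{t+\veps} - \frac{c_2}{t+\veps} y$ with $y(0) = c_1/c_2$, whose unique solution on $[0,\tau]$ is the constant $y \equiv c_1/c_2$ (substituting $y = c_1/c_2$ gives $\dot y = 0$ and $\frac{c_1}{t+\veps} - \frac{c_2}{t+\veps}\cdot\frac{c_1}{c_2} = 0$, consistent). Since $w_\ast(0) \le y(0)$ and $(t+\veps)\dot w_\ast(t) \le c_1 - c_2 w_\ast(t)$, i.e.\ $\dot w_\ast(t) \le \frac{c_1}{t+\veps} - \frac{c_2}{t+\veps} w_\ast(t)$ for $t \in (0,\tau)$ (here we use $t+\veps > 0$), the standard comparison theorem for scalar ODEs gives $w_\ast(t) \le c_1/c_2$ for all $t \in [0,\tau]$. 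Unwinding the substitution, $w(t) = w_\ast(t) + c_4(t+\veps) \le c_1/c_2 + c_4(t+\veps)$, which is the claim.

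I do not anticipate a genuine obstacle here; this lemma is strictly easier than Lemma~\ref{lem:comparison} since it is the degenerate case $c_3 = 0$ and the right-hand side is affine in $t$ rather than involving an exponential weight. The only point requiring a little care is the invocation of the comparison principle at the endpoint $t=0$: $w_\ast$ is merely continuous at $0$ and differentiable on the open interval $(0,\tau)$, so one applies comparison on $[\eta,\tau]$ for $\eta > 0$ with the (continuous) initial bound $w_\ast(\eta) \le$ (the constant-solution value, by continuity and the bound already established on $(0,\tau)$), then lets $\eta \downarrow 0$; alternatively, since $w_\ast$ is continuous on $[0,\tau]$ and the differential inequality holds on $(0,\tau)$ with $w_\ast(0) \le c_1/c_2$, one argues directly that the set $\{t : w_\ast(t) > c_1/c_2\}$ is open and, were it nonempty, its infimum $t_0$ would satisfy $w_\ast(t_0) = c_1/c_2$ and $\dot w_\ast(t_0^+) > 0$, contradicting $(t_0+\veps)\dot w_\ast(t_0) \le c_1 - c_2 w_\ast(t_0) = c_1 - c_1 = 0$. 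Either route is routine, so I would simply cite ``standard comparison for ODE's'' as the paper does in the proof of Lemma~\ref{lem:comparison}.
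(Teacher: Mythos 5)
Your argument is correct, but it follows a genuinely different route from the paper's. The paper does not redo any comparison estimate: it reduces Lemma~\ref{lem:comparisonneg} to Lemma~\ref{lem:comparison} by shifting the unknown, setting $u = w + \delta$ with $\delta > |c_1|/c_2$, so that the constant term becomes $c_1 + c_2\delta > 0$, and then applies Lemma~\ref{lem:comparison} (whose hypotheses require a nonnegative constant term) with $c_3 = 0$; unwinding the shift gives the claim. You instead argue directly: subtracting the affine term via $w_\ast(t) = w(t) - c_4(t+\veps)$, checking $(t+\veps)\dot w_\ast \le c_1 - c_2 w_\ast$ and $w_\ast(0) \le c_1/c_2$, and comparing with the constant solution $y \equiv c_1/c_2$ of the singular linear equation --- which is exactly the $c_3=0$ instance of the substitution made inside the proof of Lemma~\ref{lem:comparison}. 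Your route is self-contained and, notably, never uses $c_1 \le 0$, which shows that the sign hypothesis is only needed to pass through Lemma~\ref{lem:comparison} (whose proof does use $c_1 \ge 0$, e.g.\ to absorb the factor $e^{-c_3(t+\veps)}$ multiplying $c_1$); the paper's route buys brevity, since Lemma~\ref{lem:comparison} is already available. One minor caveat: in your sketched ``first crossing'' alternative, the assertion $\dot w_\ast(t_0^+) > 0$ at the infimum of $\left\{ t : w_\ast(t) > c_1/c_2 \right\}$ is not justified (points above the level just to the right of $t_0$ only force a nonnegative upper difference quotient, which is compatible with $\dot w_\ast(t_0)=0$); a correct elementary variant takes the \emph{last} time $t_2$ before a point $t_1$ with $w_\ast(t_1) > c_1/c_2$ at which $w_\ast \le c_1/c_2$, and uses that $\dot w_\ast < 0$ strictly on $(t_2,t_1)$. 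Since your main route simply cites standard comparison for ODE's --- exactly as the paper does in the proof of Lemma~\ref{lem:comparison} --- this does not affect correctness.
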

\begin{proof}
We fix $\delta>0$ and define $u=w+\delta$, so that the differential inequality \eqref{eq:diffc30} gives
\[
(t+\ep)\dot u(t)\le c_1+c_2\delta-c_2u+c_4(t+\ep).
\]
We choose $\delta>|c_1|/c_2$, hence $c_1+c_2\delta>0$.
Taking into account that $u(0)=(c_1+c_2\delta)/c_2$, we apply 
Lemma~\ref{lem:comparison}, that yields
\[
u(t)\le\frac{c_1+c_2\delta}{c_2}+c_4(t+\ep)
\] 
for all $t\in[0,\tau]$, that immediately leads us to our claim.
\end{proof}

\begin{proof}[Proof of Theorem~\ref{lem:existence-singular-ode}]
To show existence we argue by compactness (a standard strategy in this type of problems \cite{rachocircunkova_solvability_2008}) providing a-priori estimates uniform in $\veps \in (0,1)$ on the solutions $u^\veps$, $v^\veps$ to the system
\begin{equation}\label{eq:system-nice-form}
\left\{\begin{array}{ll}
 (t +\veps)\dot u^\veps  =  - 3 u^\veps   + 3 \bra{ (t/3)^{4} (u^\veps)^4 +  (v^\veps)^2 }^{1/4}\\
 (t+\veps) \dot v^\veps  =-  4 v^\veps   + 4 u^\veps,
  \end{array}\right.
\end{equation}
with  $u^\veps(0)=v^\veps(0)=1$. Such a system has locally (Euclidean) Lipschitz coefficients on an open set of $\R^2$, containing the initial datum, hence by standard Cauchy-Lipschitz theory it is well-posed on a maximal interval $[0,T_\veps)$. Part of the proof requires to show that $T_\veps$ is uniformly bounded from below as $\veps \downarrow 0$.

To simplify our notation, we drop the superscript $\veps$ and simply write $u$ in place of $u^\veps$, $v$ in place of $v^\veps$ in what follows. 
From the first equation of \eqref{eq:system-nice-form}, we deduce the inequality
\begin{equation}
\label{eq:lower-bound-easy}
(t+\veps)\dot u  =  - 3 u   + 3\bra{ (t/3)^{4} u^4 +  v^2 }^{1/4} \ge - 3 u,
\end{equation}
which by comparison (e.g.\ by Lemma~\ref{lem:comparison} applied with $z=-u$)  entails $u (t) \ge 0$, for $t \in [0,T_\veps)$. As a consequence,
\[ (t+\veps) \dot v  = -  4 v   + 4 u  \ge -  4 v\]
and again by comparison we have also $v(t) \ge 0$ for $t \in [0,T_\veps)$. More precise estimates can be given as follows.

\emph{Upper bounds.} By the inequalities $(a+b)^{1/4} \le a^{1/4} + b^{1/4}$, $a^{1/2} \le (1+a)/2$ valid for non-negative $a$'s and $b$'s, it follows  that
\begin{equation}\label{eq:upper-bound} (t+\veps)\dot u  = - 3 u   + 3\bra{ (t/3)^{4} u^4 +  v^2 }^{1/4} \le  - 3 u  + 3{v }^{1/2}  + t u\le  - 3 u   + \frac 3 2 \bra{v+1} + t u.\end{equation}
For any $\lambda \in \R$, from the second equation in \eqref{eq:system-nice-form}, we obtain
\[
(t+\veps)( \dot u + \lambda \dot v) \le (- 3 +4 \lambda) u   + \bra{\frac 3 2- 4 \lambda} v  + \frac 3 2 + t u. 
\]
We let $\lambda_1=1/2$ be the positive root of
\[
\lambda (- 3 +4 \lambda) = \bra{\frac 3 2- 4 \lambda}, \qs{namely} 8 \lambda^2 +2 \lambda - 3 = 0.
\]
As a consequence, we get
\[
(t+\veps)( \dot u + \lambda_1 \dot v) \le\frac 3 2- \bra{u   +\lambda_1 v}   + t u. 
\]
Since $v \ge 0$, we have
\[
(t+\veps)( \dot u + \lambda_1 \dot v)\le\frac 3 2- \bra{u+\lambda_1v}  + (t+\ep) \bra{u   +\lambda_1 v}. 
\]
By Lemma \ref{lem:comparison}, with $z =  u + \lambda_1  v$, $c_1 = \frac 3 2$, $c_2 =c_3=1$ and $c_4=0$, we have the upper bound
\begin{equation}
\label{eq:upper-bound-u+v}
 u + \lambda_1  v \le \frac 3 2 e ^{t+\veps}\quad \text{for $t \in [0,T_\veps)$}
\end{equation}
and in particular both $u$ and $v$ are non-negative and uniformly bounded in every compact set of $\R^+$. It follows that $T_\veps =+\infty$ for every $\veps>0$.
Actually, since for $0<\ep<1$ and $0\le t\le 2-\ep$, there exists a constant $c>1$, independent
of $\ep$, such that
\beq\label{eq:expcc}
\frac{3}{2} e^{t+\ep}\le \frac{3}{2}+c (t+\ep),
\eeq
then, taking into account the equation for $v$ in \eqref{eq:system-nice-form}, we have
\[ (t+\veps) \dot v  = -  4 v   + 4 u  = -  4 (1+\lambda_1)v   + 4( u +\lambda_1 v) \le -  4 (1+\lambda_1)v  + 6 + c(t+\veps)
\]
for all $t\in[0,2-\ep]$.
By Lemma~\ref{lem:comparison}, for every $t \in [0, 1]$ we have 
\[
v(t) \le 1+c(t+\veps).
\]

\emph{Lower bounds.} In particular, there holds $v(t) - c(t+\veps) \le 1$ for $t \in [0,1]$. 
We now restrict the choice of $\ep\in (0,1/c)$.
We set $\tau_\ep=1$ if $v(t)>c(t+\ep)$ for all $t\in[0,1]$. Otherwise, $0<\tau_\ep<1$
is the minimum time such that $v(\tau_\ep) = c(\tau_\ep+\veps)$.
Since $\tau_\veps$ may depend on $\ep$, we wish to show that $\tau_\ep$ is bounded away from 
zero, uniformly with respect to $\veps \downarrow 0$. Indeed, for 
$t \in [0, \tau_\veps]$, we have
\begin{equation}
\label{eq:lower-bound}
\bra{ v(t)}^{1/2} \ge  \bra{ v(t)-c(t+\veps)}^{1/2} \ge v(t)-c(t+\veps),
\end{equation}
since $0 \le v(t) - c(t+\veps) \le 1$. Thus, for $t \in (0, \tau_\veps )$, we have the differential lower bound, from the equation for $u$ in \eqref{eq:system-nice-form},
\[(t+\veps)\dot u  \ge  - 3 u + 3 v -3c(t+\veps). \]
If we add to this differential inequality the equation for $\lambda v$, for fixed $\lambda \in \R$, we obtain
\[(t+\veps)\bra{\dot u + \lambda \dot v} \ge  (- 3 + 4 \lambda)u + (3-4\lambda) v -3c(t+\veps). \]
Choosing $\lambda_2 = 3/4$ in the previous inequality, we deduce
\[
 (t+\veps)\bra{\dot u + \lambda_2 \dot v} \ge -3c(t+\veps)
\]
for $t \in (0, t_\veps)$. In particular, $\bra{\dot u + \lambda_2 \dot v} \ge -3c$
and this implies the lower bound 
\beq\label{eq:-3c}
 u +  \lambda_2 v \ge 1 + \lambda_2 -3ct\ge  1 + \lambda_2 -3c(t+\ep),
\eeq
which we use to find the following lower bound
\[
 (t+\veps) \dot v  = -  4 v   + 4 u  = -  4 (1 +\lambda_2)v   + 4( u +\lambda_2 v) \ge 4(1 + \lambda_2) -  4 (1+\lambda_2)v  -12\, c(t+\veps).
\]
Now, we apply Lemma \ref{lem:comparisonneg} to $w=-v$, getting $v \ge 1 - 12\,c(t+\veps)$ for $t \in [0, \tau_\veps]$. In particular, we surely have that $\tau_\veps>0$ is bounded from below by $\frac{1}{13\,c} - \veps$. If we let $\ep$ vary in $\ep\in(0,1/26\,c)$, we get the uniform estimate
\[
\tau_\ep\ge\frac{1}{26\,c}.
\]

\emph{Compactness in $C([0,\tau]; \R^2)$}. For $\veps \in(0,1/26\,c)$ and setting
$\tau=1/26\,c$, we have the bound $\abs{v(t) - 1} \le 12\,c(t+\veps)$ for all $t \in [0,\tau]$.
Joining the estimates \eqref{eq:upper-bound-u+v}, \eqref{eq:expcc} and \eqref{eq:-3c},
for every $t \in [0,\tau]$, we get 
\[
\abs{u(t)-1} \le 12\, c(t+\veps).
\]
If we set $\tilde c=12 c$ and plug the previous bounds in the system for $u$ and $v$, we deduce that
\begin{equation}\label{eq:compactness}
 (t+\veps) \abs{ \dot v}  \le \tilde c(t+\veps), \quad (t+\veps) \abs{\dot u}  \le \tilde c(t+\veps), \quad \text{for $t \in (0,\tau)$,}
 \end{equation}
hence $u$ and $v$ are uniformly Lipschitz continuous on $[0,\tau]$ with respect to
$\ep\in(0,1/26\,c)$. 
By Ascoli-Arzel\`a theorem, we may extract a family $u^{\veps(n)}$, $v^{\veps(n)}$ converging in $C([0,\tau],\R^2)$ towards continuous functions $u$, $v$. These functions solve the system \eqref{eq:counterexample-good-coordinates}, with initial conditions $u(0) = 1$, $v(0) = 1$. Indeed, if we write the integral form of the approximating systems, for $t \in [0,\tau]$,
\begin{equation*}
\begin{split}
 u^{\veps}(t) &= 1 + \int_0^t \sqa{- 3 u^\veps (s)  + 3 \bra{ (s/3)^{4} (u^\veps(s))^4 +  (v^\veps(s))^2 }^{1/4}} \frac{ds}{s+\veps},\\
 v^{\veps}(t) &= 1 + \int_0^t\sqa{-  4 v^\veps(s)   + 4 u ^\veps(s)} \frac{ds}{s+\veps},
\end{split}
\end{equation*}
we have pointwise convergence of the integrands as $\veps>0$ (everywhere except for $s = 0$), uniformly dominated by some constant, and the limit by Lebesgue theorem.
\end{proof}

\subsection{An autonomous counterexample}\label{sec:autonomous-counterexample} To exhibit a counterexample in the autonomous case, it is sufficient to slightly modify the vector field $b$  in \eqref{eq:b-counterexample}, considering instead
\[
 a(x) :=  \inf_{s \in \R} d((s,0,0), x ), \quad \text{for $x \in \H$,}
\]
which is still a horizontal Lipschitz function, with respect to the distance $d$, and it is everywhere less or equal than $d((t,0,0), x )$. The proof goes almost identically as in the previous case: in particular, we perform the same change of variables.
As a result, from $\gamma_1$, $\gamma_2$, $\gamma_3$, taking into account that $\gamma_1(t) = t$, we obtain $u$, $v$, along with the system
\begin{equation*}
\left\{\begin{array}{ll}
 t \dot u  =  - 3 u   + 3 F(t,u,v)\\
 t \dot v  =-  4 v   + 4 u 
  \end{array}\right.
\end{equation*}
with  $u(0)=v(0)=1$ and $F(t,u,v) = \frac { 6}{t^{2}} a\bra{t,t^3 u/ 18, t^4(2u-v)/36)} $. 
The existence of a solution to the previous singular system with the given initial conditions leads to our counterexample. We use a compactness argument for the approximating systems
\begin{equation}\label{eq:approximating-systems-autonomous-case}
\left\{\begin{array}{ll}
 \bra{t +\veps} \dot u^\veps  =  - 3 u^\veps   + 3 F(t,u^\veps,v^\veps)\\
  \bra{t +\veps} \dot v ^\veps =-  4 v^\veps   + 4 u^\veps 
  \end{array}\right.
\end{equation}
with  $u^\veps(0)=v^\veps(0)=1$, as $\veps \downarrow 0$. For every $\veps>0$, existence (up to some maximal time $T_\veps>0$) for $(u^\veps, v^\veps)$ -- that we denote simply by $(u,v)$ in what follows --  is a consequence e.g.\ of Peano's theorem and the fact that $F$ (initially defined on $(0,\infty)\times \R^2$) can be uniquely extended to a continuous function with respect to $(t,u,v) \in [0,\infty)\times \R^2$. Indeed, one has the identities
\begin{equation}\label{eq:autonomous-case-identities} \begin{split}  F(t,u,v) & = \frac { 6}{t^{2}} \inf_{s \in \R}  \bra{ \bra{|s-t|^2 + |t^3 u/ 18|^2}^2 + |t^4(2u-v)/36 - s t^3 u/ 18 |^2 }^{1/4} \\
\text{$\sqa{s \mapsto ts}$}& =\frac { 6 }{t} \inf_{s \in \R}  \bra{ \bra{|s-1|^2 + t^4 | u/18|^2}^2 + t^4 |(2u-v)/36 - s u/ 18 |^2 }^{1/4}\\
\text{$\sqa{s \mapsto s+1}$} & =\frac { 6 }{t} \inf_{s \in \R}  \bra{ \bra{ s^2 + t^4 | u/18|^2}^2 + t^4 |v/36 + s u/ 18 |^2 }^{1/4}\\
\text{$\sqa{s \mapsto ts}$} & = 6 \inf_{s \in \R}  \bra{ \bra{ s^2 + t^2 | u/18|^2}^2 +  |v/36 + s t u/ 18 |^2 }^{1/4},\\
\end{split}
\end{equation}
which shows that $F(t,u,v)$ can be defined also when $t=0$. To show continuity, we use the fact that, for fixed $a$, $b$, $c\in \R$ with $a\ge 0$, the convex, non-negative, function $s \mapsto f(s) := \bra{ s^2 + a}^2 + \bra{bs+c }^2$ on $\R$ admits a unique minimum point $\bar{s}$ and minimum value $f(\bar{s})$, and it is not difficult to show that $(a,b,c) \mapsto \bar{s}(a,b,c)$ is continuous. Hence, the composition $(t,u,v) \mapsto f(\bar{s}\bra{t^2| u/18|^2, t u/ 18, v/36 })$ is continuous, and so its fourth root, which coincides with $F(t,u,v)/6$.

Next, we notice that the simple inequalities  $0 \le F(t,u,v) \le \bra{ (t/3)^{4} u^4 +  v^2 }^{1/4}$ hold, the latter obtained letting $s=0$ in the last line of \eqref{eq:autonomous-case-identities}. This allows us to argue analogously as in the proof of Theorem~\ref{lem:existence-singular-ode}, obtaining the analogue of \eqref{eq:lower-bound-easy} and the lower bounds $u(t)\ge 0$, $v(t) \ge 0$, as well as the analogue of \eqref{eq:upper-bound}, hence \eqref{eq:upper-bound-u+v} and 
\begin{equation}\label{eq:upper-bound-v} v(t) \le 1  + c_1(t+\veps),\end{equation}
for some constant $c_1>0$. In particular, we have $T_\veps > 1$, hence some solution $(u, v)$ to \eqref{eq:approximating-systems-autonomous-case} exists on the interval $[0,1]$, for every $\veps >0$, with both $\norm{u} := \sup_{t \in [0,1]} \abs{u(t)} \le c_2$ and $\norm{v} := \sup_{t \in [0,1]} \abs{v(t)} \le c_3$, uniformly bounded as $\veps \downarrow 0$ (i.e., $c_2>0$ and $c_3>0$ are fixed numbers). Therefore, because of the continuity of the minimum point $(a,b,c) \mapsto \bar{s}(a,b,c)$, we have that the function $t\mapsto \sigma(t):=\bar{s}\bra{t^2| u/18|^2, t u/ 18, v/36 }$ is uniformly bounded on $[0,1]$, i.e.\ $\norm{\sigma} = \sup_{t \in [0,1]} \abs{\sigma(t)}$ is bounded from above by some function of $\norm{u}$ and $\norm{v}$, and ultimately by some constant $c_4>0$ (independent of $\veps \downarrow 0$).

Let then $\tau_\veps>0$ be the first time such that $v(t) - (t+\veps) c_5 =0$, where $c_5$ is a fixed constant, larger than constant $c_1$ appearing in \eqref{eq:upper-bound-v} and than $2 c_2c_4$ (such a constant $c_5$ exists, provided that $\veps>0$ is small enough, since $v(0) =1$). As in the autonomous case, $\tau_\veps$ depends upon $\veps>0$ and in principle it could be that $\tau_\veps \to 0$ as $\veps \downarrow 0$, but the argument that we give provides a uniform lower bound for $\tau_\veps$. Indeed, the crucial point is to show, in place of \eqref{eq:lower-bound}, the validity of the inequality
\begin{equation}\label{eq:crucial-lower-bound-autonomous-counterxample} F(t,u(t),v(t)) \ge v(t) - c_5(t+\veps), \quad \text{for $t \in [0,\tau_\veps \land 1]$.} \end{equation}
For $t \in [0,\tau_\veps \land 1]$, we have
\begin{equation*}\begin{split}  F(t,u(t),v(t)) & =  6 \inf_{s \in \R}  \bra{ \bra{ s^2 + t^2 | u(t)/18|^2}^2 +  |v(t)/36 + s t u(t)/ 18 |^2 }^{1/4}\\
\bra{\text{by definition of $\sigma$}} & =  6 \bra{ \bra{ \sigma(t)^2 + t^2 | u(t)/18|^2}^2 +  |v(t)/36 + \sigma(t) t u(t)/ 18 |^2 }^{1/4}\\
& \ge  6  \abs{ \bra{v(t) + 2 t \sigma(t) u(t)}/ 36 }^{1/2}\\
& = \abs{v(t) + 2 t \sigma(t) u(t)}^{1/2}\\
& \ge \abs{ v(t) - v(t)  - 2 (t+\veps) \norm{\sigma} \norm{u}}^{1/2} \ge \abs{ v(t) - c_5(t+\veps) }^{1/2} 
\end{split}
\end{equation*}
where in the last inequality we used the fact that
\[ v(t) + 2 t \sigma(t) u(t) \ge v(t)  - 2 (t +\veps)\norm{\sigma} \norm{u} \ge  v(t)  - c_5(t+\veps) \ge 0, \text{for $t \in [0, \tau_\veps]$.}\]
Because of \eqref{eq:upper-bound-v} and the fact that $c_5 \ge c_1$, we have that $v(t) - c_5 t \le 1$, hence inequality \eqref{eq:crucial-lower-bound-autonomous-counterxample}: 
\[ F(t,u(t),v(t)) \ge  \abs{ v(t) - c_5(t+\veps) }^{1/2}  \ge v(t) - c_5(t+\veps), \text{for $t \in [0, \tau_\veps]$.} \]
As this lower bound is settled, we argue identically as in the non-autonomous case, obtaining lower bounds for $u$, $v$ and finally the desired inequalities analogous to \eqref{eq:compactness}, yielding compactness in $C([0,\tau], \R^2)$ as $\veps \downarrow 0$.

\bibliography{biblio-dario}
\bibliographystyle{plain}

\end{document}